\newcommand{\x}{\boldsymbol x}
\newcommand{\y}{\boldsymbol y}
\newcommand{\X}{\mathcal X}
\renewcommand{\emptyset}{\varnothing}
\renewcommand{\ge}{\geqslant}
\renewcommand{\geq}{\geqslant}
\renewcommand{\le}{\leqslant}
\renewcommand{\leq}{\leqslant}
\theoremstyle{thmstyleone}%
\newtheorem{theorem}{Theorem}
\newtheorem{proposition}[theorem]{Proposition}%
\newtheorem{corollary}[theorem]{Corollary} 
\theoremstyle{definition}%
\newtheorem{definition}[theorem]{Definition}%
\newtheorem{example}[theorem]{Example}%
\newtheorem{remark}[theorem]{Remark}%
\newtheorem{problem}[theorem]{Problem}
\begin{document}
\title[Article Title]{Enforce and selective  operators of combinatorial games}

\author[1]{\fnm{Tomoaki} \sur{Abuku}}\email{buku3416@gmail.com}
\equalcont{These authors contributed equally to this work.}

\author[2]{\fnm{Shun-ichi} \sur{Kimura} }\email{skimura@hiroshima-u.ac.jp}
\equalcont{These authors contributed equally to this work.}

\author[3]{\fnm{Hironori} \sur{Kiya}}\email{kiya@omu.ac.jp}
\equalcont{These authors contributed equally to this work.}

\author[4]{\fnm{Urban} \sur{Larsson}}\email{larsson@iitb.ac.in}
\equalcont{These authors contributed equally to this work.}

\author*[5]{\fnm{Indrajit} \sur{Saha}}\email{indrajit@inf.kyushu-u.ac.jp}
\equalcont{These authors contributed equally to this work.}

\author[6]{\fnm{Koki} \sur{Suetsugu}}\email{suetsugu.koki@gmail.com}
\equalcont{These authors contributed equally to this work.}

\author[2]{\fnm{Takahiro} \sur{Yamashita}}\email{d236676@hiroshima-u.ac.jp}
\equalcont{These authors contributed equally to this work.}



\affil[1,6]{
\orgname{National Institute of Informatics}, 
\orgaddress{\street{Hitotsubashi}, \city{Chiyoda}, \postcode{101-8430}, \state{Tokyo}, \country{Japan}}}

\affil*[1]{\orgname{Gifu University}, \orgaddress{\street{1-1 Yanagido}}, \city{Gifu City}, \postcode{501-1193}, \state{Gifu}, \country{Japan}}

\affil[2]{
\orgdiv{Department of Mathematics},
\orgname{Hiroshima University}, 
\orgaddress{\street{Kagamiyama}, \city{Higashi-Hiroshima City}, \postcode{739-8526}, \state{Hiroshima}, \country{Japan}}}

\affil[3]{
\orgdiv{Department of Informatics}, 
\orgname{Osaka Metropolitan University}, 
\orgaddress{\street{Gakuencho, Nakaku}, 
\city{Sakai}, 
\postcode{599-8531}, 
\state{Osaka}, 
\country{Japan}}}

\affil[4]{
\orgdiv{Industrial Engineering and Operations Research},
\orgname{Indian Institute of Technology Bombay}, 
\orgaddress{\street{Powai}, \city{Mumbai}, \postcode{400076}, \state{Maharashtra}, \country{India}}}

\affil[5]{
\orgdiv{Department of Informatics},
\orgname{Kyushu University}, 
\orgaddress{\street{Motooka}, \city{Fukuoka}, \postcode{819-0395}, \state{Fukuoka}, \country{Japan}}}

\affil*[6]{\orgname{Waseda University}, \orgaddress{\street{513 Waseda-Tsurumaki-Cho}}, \city{Sinjuku-ku}, \postcode{162-0041}, \state{Tokyo}, \country{Japan}}

\abstract{
We consider an {\em enforce operator} on impartial rulesets similar to the Muller Twist and the comply/constrain operator 
of Smith and St\u anic\u a, 2002. Applied to the rulesets A and B, on each turn the opponent enforces one of the rulesets and the current player complies, by playing a move in that ruleset. If the outcome table of the enforce variation of A and B is the same as the outcome table of A, then we say that A dominates B. We find necessary and sufficient conditions for this relation. Additionally, we define a {\em selective operator} and explore a distributive-lattice-like structure within applicable rulesets. Lastly, we define nim-values under enforce-rulesets, and establish that the Sprague-Grundy theory continues to hold, along with illustrative examples.

}

\keywords{Nim, Wythoff Nim, Combinatorial game theory, Comply/Constrain operator, Enforce operator, Muller Twist, Selective operator, Nim-value.}



\maketitle

\section{Introduction}
\label{sec:Intro}
Two players, Alice and Bob meet to play the game displayed in Figure~\ref{fig:disjunctive(knight.nim+bishop.nim)}. Last move wins, and the players alternate turns. There are two game pieces, but each individual piece is a combination of two sub-pieces that follow standard {\sc Chess} rules, while each move has to decrease the Manhattan distance to $(0,0)$.\footnote{In the two-dimensional space, the Manhattan distance between two points $(x_1, y_1)$ and $(x_2, y_2)$ is  $|x_1-x_2| +|y_1-y_2|$.}  At each stage of play, the current player decides which piece they want to move, but the opponent decides the current rules of that piece. After moving, any such enforced rule is forgotten. Pieces are invisible with respect to each other, i.e. they may reside in the same square or jump each other. 
 Suppose that Alice starts, and she wishes to play the piece $\large \symbishop \odot \large \symrook$; then Bob enforces either $\large \symbishop$ or $\large\symrook$; in either case, the other component, $
 \large \symknight \odot \large \symrook$, remains at its original position. Suppose that Bob enforces the Bishop. Then Alice might play this piece to the square $(1,0)$. At this point, say Bob wants to play the other piece, and suppose that Alice enforces the Knight. Then Bob has three options. Suppose that he plays such that the next position is as in Figure~\ref{fig:disjunctive(knight.nim+bishop.nim)2}.
\begin{figure}[htbp!]
\begin{center}
\begin{tikzpicture}[scale = 1,
         box/.style={rectangle,draw=gray!60!yellow,  thick, minimum size=1 cm}
]
\foreach \x in {0,1,...,5}{
     \foreach \y in {0,1,...,5}
         \node[box] at (\x,\y){};
 }
\draw[ultra thick, color=gray!60!yellow] (-0.5,-0.5) -- (-0.5, 5.5);
\draw[ultra thick, color=gray!60!yellow] (-0.5, 5.5)-- (5.5, 5.5);
\draw (3,1) node {\large\symbishop \!$\cdot$\!\! \symrook};
\draw[line width=0.65 pt] (3,1) circle (12.2 pt);
\draw (5,2) node {\large\symknight $\cdot$\!\! \symrook};
\draw[ line width=0.65 pt] (5,2) circle (12.2 pt);
\draw (0, 5.9) node {$0$};
\draw (-0.9, 5) node {$0$};
\end{tikzpicture}
\end{center}
 \vspace{3mm}
\caption{The technical term for this position is ``the disjunctive sum'' $\large\symbishop \odot \large\symrook + \large\symknight \odot \large\symrook$. Is this a first or second player win?}\label{fig:disjunctive(knight.nim+bishop.nim)}
\end{figure}
\begin{figure}[htbp!]
\begin{center}
\begin{tikzpicture}[scale = 1,
         box/.style={rectangle,draw=gray!60!yellow,  thick, minimum size=1 cm}
]

\foreach \x in {0,1,...,5}{
     \foreach \y in {0,1,...,5}
         \node[box] at (\x,\y){};
 }
\draw[ultra thick, color=gray!60!yellow] (-0.5,-0.5) -- (-0.5, 5.5);
\draw[ultra thick, color=gray!60!yellow] (-0.5, 5.5)-- (5.5, 5.5);

\draw (0,4) node {\large\symbishop \!$\cdot$\!\! \symrook};
\draw[line width=0.65 pt] (0,4) circle (12.2 pt);
\draw (3,3) node {\large\symknight $\cdot$\!\! \symrook};
\draw[ line width=0.65 pt] (3,3) circle (12.2 pt);
\draw (0, 5.9) node {$0$};
\draw (-0.9, 5) node {$0$};

\end{tikzpicture}
\end{center}
 \vspace{3mm}
\caption{The position after two moves. Alice to play. }\label{fig:disjunctive(knight.nim+bishop.nim)2}
\end{figure}
Could either player have played better? We defer the solution of this game towards the end of Section~\ref{sec:nimvalueforcomply}.
 
Here, we generalize this idea to a comply/constrain operator on pairs of rulesets that we dub the {\em enforce operator}. First,  we study play on a single component. Given a couple of rulesets with a common position set, at each stage of play, before the current player moves, exactly one of the rulesets is enforced by the opponent (meaning that all the other ones are prohibited).\footnote{Usually, there will be a pair of rulesets, but any finite (or infinite) number of rulesets could belong to the given set.} Let us give one more example, this time played on a single component.\\

\begin{example}
\label{example:Yama}

The ruleset {\sc Yama Nim} resembles {\sc Two Heap Nim}, except that a player must remove at least two tokens 
from one heap and, in the same move, add one token 
to the other heap \cite{KiY23, Y23}. 
For an example of our enforce-operator,  if the pair of rulesets is {\sc Nim} and {\sc Yama Nim}, and the position is $(1,1)$, then the current player cannot move if the other player enforces {\sc Yama Nim}. That would be a successful previous player constraint. If the position is $(0,2)$, the current player can win immediately unless {\sc Yama Nim} is enforced. By following the ruleset of {\sc Yama Nim}, the only option is $(1,0)$, from which {\sc Yama Nim} should be enforced. Thus, in the combined game of {\sc Nim} and {\sc Yama Nim}, $(0,2)$ is a win for the current player. 

We will show how {\sc Yama Nim} `dominates' {\sc Nim} in the sense that the solution of {\sc Yama Nim} coincides with the solution of the enforce-ruleset of {\sc Nim} and {\sc Yama Nim}.\\
\end{example}

We generalize this idea to a concept of {\em ruleset domination}  for classes of impartial combinatorial games. Our first main result is a precise description of when a ruleset dominates another ruleset.

In addition, we establish that domination is not an order, but we find properties close to those of an order. We define the notion of strong domination with respect to pairs of rulesets and show that strong domination implies domination. We prove that strong domination satisfies the desired transitive properties.

Further, we study another operator referred to as the {\em selective operator}; on each turn, the current player selects exactly one of a given set of rulesets and plays according to its rules. The two operators can be combined as long as the rulesets share the same position set, i.e. `game board'. We demonstrate a distributive-lattice-like structure of the combined operators. 

As mentioned in the first example (Figure~\ref{fig:disjunctive(knight.nim+bishop.nim)}) we generalize our enforce operator to disjunctive sum play.  There are several game components; at each stage of play, the current player picks exactly one component and plays in that component according to its ruleset (whether a combination component or not), while any other component remains the same.
In order to solve games in this setting, we define nim-values, a.k.a. nimbers, under enforce-rulesets and argue that the famous Sprague-Grundy theory continues to hold, which is our second main result. In particular, we prove that when there are several game components, some of them with enforce-rulesets, their disjunctive sum is a loss for the current player if and only if the nim-sum of their defined nimbers is $0$.

The paper is organized as follows. Section~\ref{sec:review} gives a literature review. Section~\ref{sec:def_impartial} defines our combinatorial games. Section~\ref{sec:complyoperator} introduces selective and enforce operators, and solves the ruleset domination problem.  Section~\ref{sec:distributive} establishes a distributive-lattice-like structure for our operators. Section~\ref{sec:nimvalueforcomply} discusses nimbers under the enforce operator.  Section~\ref{sec:examples} solves the initial problem and provides a number of more game examples, and finally, in Section~\ref{sec:openp}, we propose some open problems.

\section{Muller Twist literature}\label{sec:review}
Our enforce operator belongs to the family of so-called Muller Twist games, a.k.a. comply/constrain games, or games with a blocking maneuver. Let us briefly review this development. 
The Muller Twist of a combinatorial game was introduced/popularized through the ruleset {\sc Quarto}.~The first occurrences in the literature are \cite{HR01} and \cite{SS02}, and the term {\em comply/constrain} was coined. At each stage of play, before the current player makes a move, the opponent may constrain their set of options. The current player complies and plays according to the constrained ruleset. After each move, any previous constraint is forgotten. Various contributions study a variation of a Muller Twist as a {\em blocking maneuver}, \cite{HR01,HR1,HR,GS04,HL06,L09,L11,L15,CLN17}. In such rulesets, typically, a given parameter determines the maximum number of prohibited/constrained moves within (subsets of) the options of an underlying combinatorial game.

In the first research paper on the subject, \cite{SS02}, they study nim-type games where the opponent blocks the removal of an odd or even number of pieces, and in \cite{GS04}, they generalize this to: ``the number taken must not be equivalent to some numbers modulo $n$”. Both these papers can be viewed in the light of the enforce operator, the latter would require several combined rulesets.

 In \cite{SS02}, we find a nice description of the trailblazing ruleset {\sc Quarto}: 
``The game Quarto, created by Blaise Muller and published by Gigamic, was one of the of five Mensa Games of the Year in 1993 and has received other international awards. The sixteen game pieces show all combinations of size (short or tall), shade (light or dark), solidity (shell or filled), and shape (circle or square). Two players take turns placing pieces on a four by four board and the object is to get four in a line with the same characteristic all short, for example. Only one piece can go in a cell and, once placed, the pieces stay put. Blaise Muller’s brilliant twist is that you choose the piece that your opponent must place and they return the favor after placing it.'' Note that the idea of the opponent picking the game piece resembles our first example in Figure~\ref{fig:disjunctive(knight.nim+bishop.nim)}.

  A blocking variation of  {\sc Nim} was introduced as a problem in \cite{HR01}. This variation proceeds the same way as ordinary {\sc Nim} \cite{B02}, but before each move, the opponent may block at most one option. In \cite{HR}, the authors solve this blocking variation of {\sc Nim} on three heaps. The preprint \cite{HR1}  studies compositions of blocking impartial games and computes their nim-values.

In \cite{HL06}, the authors introduce the Muller Twist in connection with {\sc Wythoff Nim}; here, bishop-type moves can be blocked.  The authors prove that the solutions of such games are close to certain Beatty sequences. 
In \cite{L09}, the author considers {\sc Two Pile Nim } with a move-size dynamic constraint on the moves.  The author proves that the winning strategy is $\mathcal P$-equivalent to {\sc Wythoff Nim} with a blocking maneuver on the bishop-type moves. In \cite{L15}, the author considers several restrictions of the game $m$-{\sc Wythoff Nim}, which is an extension of {\sc Wythoff Nim}. The author considers a blocking maneuver on the rook-type options and finds that they are $\mathcal P$-equivalent to specific congruence restrictions on the rook-type options, and that the outcomes are exactly described by Beatty sequences. In \cite{L11}, the author considers {\sc Blocking Wythoff Nim} with a generic blocking maneuver; for a given parameter $k$, at each stage of play, at most $k-1$  options may be blocked by the opponent. The author finds the winning strategies for $k=2$ and $k=3$. 
 In \cite{CLN17}, the authors demonstrate that  {\sc Blocking Wythoff  Nim}  can be solved by a cellular automaton. The authors also present experimental results showing fascinating self-organized structures  that are invariant of $k$ as the blocking parameter $k$ increases. Another class of games in the context of  Muller Twist is {\sc Push the Button} \cite{DU18}, a game that has two rulesets, denoted as $A$ and $B$. A game starts by moving according to the $A$ ruleset, but at some point, a player may press the button, and all subsequent moves are played according to the $B$ ruleset. Our enforce rulesets are similar, except that `the button is pressed many times', switching back and forth between the rulesets. The authors of \cite{DU18} study pairwise combinations of the classical rulesets {\sc Nim}, {\sc Wythoff Nim}, and {\sc Euclid}, and compute outcomes of pairwise combined rulesets.
  
 In \cite{H08, SR18}, the authors consider games where the opponent enforces either a subtraction set $S$ or its complement. Both papers study the problem of periodicity of nimbers. This ruleset combination belongs to the enforce operator. We generalize their work by considering generic impartial rulesets.  

\section{Impartial rulesets and games}
\label{sec:def_impartial}
Our terminology and notation is an adaptation of \cite{S13} (see also \cite{C76} and \cite{BCG82}). Here, we will make a clear distinction between `ruleset' and `game'. Usually, one is interested in a comparison of game positions (or game values), but here, the main interest is a comparison of entire rulesets. This type of research will require a common `game board'.\\

\begin{definition}[Impartial Ruleset]\label{def:impartial}
  Let $\X$ be a set of game positions and let $f:\X \to 2^\X$ be an option map which sends $\x \in \X$ to its set of options $f(\x) \subset \X$. Then $(\X,f)$ is an impartial ruleset.\\ 
  
 \end{definition}

Informally, we call the set $\X$ of all (starting) positions, the `game board'. \\

\begin{definition}[Terminal Position]\label{def:terminal}
  If $\x \in \X$ satisfies $f(\x)=\emptyset$, then $\x$ is a terminal position.\\ 
\end{definition}

All our rulesets will be short and we define the same in the following.\\

\begin{definition}[Short Ruleset]\label{def:shortgame}
  The ruleset $A=(\X,f)$ is short if for any $\x \in \X$, there exists a non-negative integer $T_A(\x)$ such that every play sequence,  even ignoring the alternating play condition, starting in $\x$, terminates in at most $T_A(\x)$ moves.\\
\end{definition}

  Given a short ruleset $(\X,f)$, we play a 2-player game by assigning a starting position $\x\in \X$ and a starting player. The players choose an option alternately, and a player who must play from a terminal position loses (normal-play convention). \\

A fundamental result of short normal-play impartial rulesets (a variation of Zermelo's folklore theorem \cite{Z13}) states that we can recursively define a perfect play outcome, to every game position. These outcomes are referred to as    $\mathcal{P}$-positions  (the previous player wins) and $\mathcal{N}$-positions (the current player wins).\\

\begin{definition}[Outcome]
\label{def:outcome}
Consider a short impartial ruleset $A = (\X, f)$. Then $\mathcal{O}(\x)=\mathcal{O}_A(\x) \in \{\mathcal{P},\mathcal{N}\}$ is  the perfect play outcome of the position $\x \in \X$, under ruleset $A$. \\
\end{definition}

Let $\mathbb{Z}_{> 0}=\{1,2,\ldots\}$ and $\mathbb{Z}_{\geq 0}=\mathbb{Z}_{> 0} \cup\{0\}$ denote the positive and non-negative integers respectively. The next examples review four impartial rulesets, together with their outcomes, on the common game board  $\X=\mathbb{Z}_{\geq 0} \times \mathbb{Z}_{\geq 0}$. \\

\begin{example}[{\sc Two Heap Nim}]
    For all positions $(x,y)\in \X$ of {\sc Nim},  $\large {\symrook}$, the set of options is
    \begin{eqnarray*}
    \label{Eqn:f_function_nim}
       f((x,y)) &=& \{(x-i,y) \mid 1 \leq i \leq x \}
                          \cup\; \{(x,y-i)  \mid 1 \leq i \leq y \}, 
  \end{eqnarray*}
and $\mathcal{O}_{\large {\symrook}}((x,y))=\mathcal{P}$ if and only if $x = y$.\\
\end{example}
We introduce the ruleset {\sc White Bishop}.\\

\begin{example}[{\sc White Bishop}]
    For all positions $(x,y) \in \X$ of {\sc White Bishop},  $\large {\symbishop}$, the set of options is
    \begin{eqnarray*}
    \label{Eqn:f_functioncornerthebishop}
       f((x,y)) &=&  \{(x-i,y-i) \mid 1 \leq i \leq \min(x,y) \}, 
  \end{eqnarray*}
and $\mathcal{O}_{\large {\symbishop}}((x,y))=\mathcal{P}$ if and only if $x=0$ or $y=0$.\\ 
 \end{example}

\begin{example}[{\sc Wythoff Nim} \cite{W07}, a.k.a. {\sc Corner the Lady} \cite{GM97}]
Consider {\sc Wythoff Nim}, $\large {\symqueen}$. Then, 
    for all $(x,y) \in \X$, the set of options is
    \begin{eqnarray*}
    \label{Eqn:f_functionWythoff}
       f((x,y)) &=& \{(x-i,y) \mid 1 \leq i \leq x \} \\
                         & & \cup\; \{(x,y-i)  \mid 1 \leq i \leq y \} \\
                         & & \cup\; \{(x-i,y-i)  \mid 1 \leq i \leq \min(x,y) \}. 
  \end{eqnarray*}
 The outcome  $\mathcal{O}_{\symqueen}((x,y))=\mathcal{P}$ if and only if $x=\lfloor n\alpha\rfloor$ and $y=x+n$, for some $n\in \mathbb Z_{\ge 0}$, where $\alpha = \frac{1+\sqrt{5}}{2}$, or vice versa.\\ 
 \end{example}

\begin{example}
[{\sc Yama Nim} \cite{KiY23,Y23}]
    For all positions $(x,y) \in \X$ of {\sc Yama Nim}, $\large\rook_{\bf Y}$, the set of options is
  \begin{eqnarray*}
    \label{Eqn:y_function}
       f((x,y)) &=&\{(x-i,y+1) \mid 2 \leq i \leq x \}
                   \cup\; \{(x+1,y-i) \mid 2 \leq i \leq y \}, 
  \end{eqnarray*}
  and $\mathcal{O}_{\large\rook_{\bf Y}}((x,y))=\mathcal{P}$ if and only if $|x-y| \leq 1$.\\
  \end{example}

We illustrate the options of the rulesets $\large {\symrook}$, $\large {\symbishop}$, $\large {\symqueen}$ and  $\large\rook_{\bf Y}$ in Figure \ref{fig:sel}.    
The game pieces represents typical starting positions, $x\in \X$ and the white circles represent the option set,  $f(\x)$, of the respective ruleset. 
The initial $\mathcal{P}$-positions of these rulesets are illustrated in Figure~\ref{fig:nim,bishop,whythoff,yama}.

\begin{figure}[htbp!]
  \begin{minipage}{.5\textwidth}
  \subcaption{}
    \centering
    \begin{tikzpicture}[scale = 0.5,
         box/.style={rectangle,draw=gray!60!yellow,  thick, minimum size=.5 cm},
     ]

 \foreach \x in {0,1,...,10}{
     \foreach \y in {0,1,...,10}
         \node[box] at (\x,\y){};
 }

\draw[thick, color=gray!60!yellow] (-0.55,-0.55) -- (-0.55, 10.55);
\draw[thick, color=gray!60!yellow] (-0.55, 10.55)-- (10.55, 10.55);

\draw (0, 11.2) node {$0$};
\draw (10, 11.2) node {$10$};

\draw (-1.2, 10.1) node {$0$};
\draw (-1.3, 0.1) node {$10$};

\draw (5,2) node {$\Large \symrook$};
\draw (5,3) node [draw, shape = circle, fill = white, minimum size = 0.1cm] (){};
\draw (5,4) node [draw, shape = circle, fill = white, minimum size = 0.1cm] (){};
\draw (5,5) node [draw, shape = circle, fill = white, minimum size = 0.1cm] (){};
\draw (5,6) node [draw, shape = circle, fill = white, minimum size = 0.1cm] (){};
\draw (5,7) node [draw, shape = circle, fill = white, minimum size = 0.1cm] (){};
\draw (5,8) node [draw, shape = circle, fill = white, minimum size = 0.1cm] (){};
\draw (5,9) node [draw, shape = circle, fill = white, minimum size = 0.1cm] (){};
\draw (5,10) node [draw, shape = circle, fill = white, minimum size = 0.1cm] (){};
\draw (0,2) node [draw, shape = circle, fill = white, minimum size = 0.1cm] (){};
\draw (1,2) node [draw, shape = circle, fill = white, minimum size = 0.1cm] (){};
\draw (2,2) node [draw, shape = circle, fill = white, minimum size = 0.1cm] (){};
\draw (3,2) node [draw, shape = circle, fill = white, minimum size = 0.1cm] (){};
\draw (4,2) node [draw, shape = circle, fill = white, minimum size = 0.1cm] (){};

\end{tikzpicture}
    \label{fig:nim}
  \end{minipage}
  \begin{minipage}{.5\textwidth}
  \subcaption{}
    \centering
    \begin{tikzpicture}[scale = 0.5,
         box/.style={rectangle,draw=gray!60!yellow, thick, minimum size=.5 cm},
     ]

 \foreach \x in {0,1,...,10}{
     \foreach \y in {0,1,...,10}
         \node[box] at (\x,\y){};
 }

\draw[thick, color=gray!60!yellow] (-0.55,-0.55) -- (-0.55, 10.55);
\draw[thick, color=gray!60!yellow] (-0.55, 10.55)-- (10.55, 10.55);

\draw (0, 11.2) node {$0$};
\draw (10, 11.2) node {$10$};

\draw (-1.2, 10.1) node {$0$};
\draw (-1.3, 0.1) node {$10$};

\draw (5,2) node {$\Large\symbishop$};
\draw (4,3) node [draw, shape = circle, fill = white, minimum size = 0.1cm] (){};
\draw (3,4) node [draw, shape = circle, fill = white, minimum size = 0.1cm] (){};
\draw (2,5) node [draw, shape = circle, fill = white, minimum size = 0.1cm] (){};
\draw (1,6) node [draw, shape = circle, fill = white, minimum size = 0.1cm] (){};
\draw (0,7) node [draw, shape = circle, fill = white, minimum size = 0.1cm] (){};
\end{tikzpicture}
\label{fig:ctb}
\end{minipage}

  \begin{minipage}{.5\textwidth}
  \vspace{7 mm}
  \subcaption{}
    \centering
  \begin{tikzpicture}[scale = 0.5,
         box/.style={rectangle,draw=gray!60!yellow,  thick, minimum size=.5 cm},
     ]

\foreach \x in {0,1,...,10}{
     \foreach \y in {0,1,...,10}
         \node[box] at (\x,\y){};
 }

\draw[thick, color=gray!60!yellow] (-0.55,-0.55) -- (-0.55, 10.55);
\draw[thick, color=gray!60!yellow] (-0.55, 10.55)-- (10.55, 10.55);

\draw (0, 11.2) node {$0$};
\draw (10, 11.2) node {$10$};

\draw (-1.2, 10.1) node {$0$};
\draw (-1.3, 0.1) node {$10$};

\draw (5,2) node {$\Large\symqueen$};
\draw (5,3) node [draw, shape = circle, fill = white, minimum size = 0.1cm] (){};
\draw (5,4) node [draw, shape = circle, fill = white, minimum size = 0.1cm] (){};
\draw (5,5) node [draw, shape = circle, fill = white, minimum size = 0.1cm] (){};
\draw (5,6) node [draw, shape = circle, fill = white, minimum size = 0.1cm] (){};
\draw (5,7) node [draw, shape = circle, fill = white, minimum size = 0.1cm] (){};
\draw (5,8) node [draw, shape = circle, fill = white, minimum size = 0.1cm] (){};
\draw (5,9) node [draw, shape = circle, fill = white, minimum size = 0.1cm] (){};
\draw (5,10) node [draw, shape = circle, fill = white, minimum size = 0.1cm] (){};
\draw (0,2) node [draw, shape = circle, fill = white, minimum size = 0.1cm] (){};
\draw (1,2) node [draw, shape = circle, fill = white, minimum size = 0.1cm] (){};
\draw (2,2) node [draw, shape = circle, fill = white, minimum size = 0.1cm] (){};
\draw (3,2) node [draw, shape = circle, fill = white, minimum size = 0.1cm] (){};
\draw (4,2) node [draw, shape = circle, fill = white, minimum size = 0.1cm] (){};

\draw (4,3) node [draw, shape = circle, fill = white, minimum size = 0.1cm] (){};
\draw (3,4) node [draw, shape = circle, fill = white, minimum size = 0.1cm] (){};
\draw (2,5) node [draw, shape = circle, fill = white, minimum size = 0.1cm] (){};
\draw (1,6) node [draw, shape = circle, fill = white, minimum size = 0.1cm] (){};
\draw (0,7) node [draw, shape = circle, fill = white, minimum size = 0.1cm] (){};
\end{tikzpicture}
    \label{fig:wyt}
  \end{minipage}
  \begin{minipage}{.5\textwidth}
  \vspace{7 mm}
  \subcaption{}
    \centering
    \begin{tikzpicture}[scale = 0.5,
         box/.style={rectangle,draw=gray!60!yellow,  thick, minimum size=.5 cm},
     ]
 \foreach \x in {0,1,...,10}{
     \foreach \y in {0,1,...,10}
         \node[box] at (\x,\y){};
 }

\draw[thick, color=gray!60!yellow] (-0.55,-0.55) -- (-0.55, 10.55);
\draw[thick, color=gray!60!yellow] (-0.55, 10.55)-- (10.55, 10.55);

\draw (0, 11.2) node {$0$};
\draw (10, 11.2) node {$10$};

\draw (-1.2, 10.1) node {$0$};
\draw (-1.3, 0.1) node {$10$};

\draw (5,2) node {$\large\rook_{\bf Y}$};
\draw (6,4) node [draw, shape = circle, fill = white, minimum size = 0.1cm] (){};
\draw (6,5) node [draw, shape = circle, fill = white, minimum size = 0.1cm] (){};
\draw (6,6) node [draw, shape = circle, fill = white, minimum size = 0.1cm] (){};
\draw (6,7) node [draw, shape = circle, fill = white, minimum size = 0.1cm] (){};
\draw (6,8) node [draw, shape = circle, fill = white, minimum size = 0.1cm] (){};
\draw (6,9) node [draw, shape = circle, fill = white, minimum size = 0.1cm] (){};
\draw (6,10) node [draw, shape = circle, fill = white, minimum size = 0.1cm] (){};
\draw (0,1) node [draw, shape = circle, fill = white, minimum size = 0.1cm] (){};
\draw (1,1) node [draw, shape = circle, fill = white, minimum size = 0.1cm] (){};
\draw (2,1) node [draw, shape = circle, fill = white, minimum size = 0.1cm] (){};
\draw (3,1) node [draw, shape = circle, fill = white, minimum size = 0.1cm] (){};

\end{tikzpicture}
\label{fig:yama}
\end{minipage}
 \vspace{3mm}
 \caption{The pictures represent typical options of 
 (a)~{\sc Two Heap Nim}, (b)~{\sc White Bishop}, (c)~{\sc Wythoff Nim} and (d)~{\sc Yama Nim}.
  }
   \label{fig:sel}
   \end{figure}
 \clearpage

\begin{figure}[htbp!]
 \begin{minipage}{.5\textwidth}
  \subcaption{}
    \centering
\begin{tikzpicture}[scale =0.5, box/.style={rectangle,draw=gray!60!yellow,  thick, minimum size=.5 cm},]

\foreach \x in {0,1,...,10}{
     \foreach \y in {0,1,...,10}
         \node[box] at (\x,\y){};
 }

\draw[thick, color=gray!60!yellow] (-0.55,-0.55) -- (-0.55, 10.55);
\draw[thick, color=gray!60!yellow] (-0.55, 10.55)-- (10.55, 10.55);

\draw (0, 11.2) node {$0$};
\draw (10, 11.2) node {$10$};

\draw (-1.2, 10.1) node {$0$};
\draw (-1.3, 0.1) node {$10$};

\draw (0,10) node {$\mathcal{P}$};
\draw (1,9) node {$\mathcal{P}$};
\draw (2,8) node {$\mathcal{P}$};
\draw (3,7) node {$\mathcal{P}$};
\draw (4,6) node {$\mathcal{P}$};
\draw (5,5) node {$\mathcal{P}$};
\draw (6,4) node {$\mathcal{P}$};
\draw (7,3) node {$\mathcal{P}$};
\draw (8,2) node {$\mathcal{P}$};
\draw (9,1) node {$\mathcal{P}$};
\draw (10,0) node {$\mathcal{P}$};
\end{tikzpicture}
\label{fig:nimP}
\end{minipage}
\begin{minipage}{.5\textwidth}
     \subcaption{}
    \centering
    \begin{tikzpicture}[scale = 0.5,
         box/.style={rectangle,draw=gray!60!yellow,  thick, minimum size=.5 cm},
     ]
 \foreach \x in {0,1,...,10}{
     \foreach \y in {0,1,...,10}
         \node[box] at (\x,\y){};
 }

\draw[thick, color=gray!60!yellow] (-0.55,-0.55) -- (-0.55, 10.55);
\draw[thick, color=gray!60!yellow] (-0.55, 10.55)-- (10.55, 10.55);

\draw (0, 11.2) node {$0$};
\draw (10, 11.2) node {$10$};

\draw (-1.2, 10.1) node {$0$};
\draw (-1.3, 0.1) node {$10$};

\draw (0,0) node {$\mathcal{P}$};
\draw (0,1) node {$\mathcal{P}$};
\draw (0,2) node {$\mathcal{P}$};
\draw (0,3) node {$\mathcal{P}$};
\draw (0,4) node {$\mathcal{P}$};
\draw (0,5) node {$\mathcal{P}$};
\draw (0,6) node {$\mathcal{P}$};
\draw (0,7) node {$\mathcal{P}$};
\draw (0,8) node {$\mathcal{P}$};
\draw (0,9) node {$\mathcal{P}$};
\draw (0,10) node {$\mathcal{P}$};

\draw (1,10) node {$\mathcal{P}$};
\draw (2,10) node {$\mathcal{P}$};
\draw (3,10) node {$\mathcal{P}$};
\draw (4,10) node {$\mathcal{P}$};
\draw (5,10) node {$\mathcal{P}$};
\draw (6,10) node {$\mathcal{P}$};
\draw (7,10) node {$\mathcal{P}$};
\draw (8,10) node {$\mathcal{P}$};
\draw (9,10) node {$\mathcal{P}$};
\draw (10,10) node {$\mathcal{P}$};
\end{tikzpicture}
\label{fig:bishopP}
\end{minipage}

 \begin{minipage}{.5\textwidth}
\vspace{7 mm}
  \subcaption{}
    \centering
\begin{tikzpicture}[scale = 0.5,
         box/.style={rectangle,draw=gray!60!yellow, thick, minimum size=.5 cm},
     ]

 \foreach \x in {0,1,...,10}{
     \foreach \y in {0,1,...,10}
         \node[box] at (\x,\y){};
 }

\draw[thick, color=gray!60!yellow] (-0.55,-0.55) -- (-0.55, 10.55);
\draw[thick, color=gray!60!yellow] (-0.55, 10.55)-- (10.55, 10.55);

\draw (0, 11.2) node {$0$};
\draw (10, 11.2) node {$10$};

\draw (-1.2, 10.1) node {$0$};
\draw (-1.3, 0.1) node {$10$};

\draw (0,10) node {$\mathcal{P}$};
\draw (2,9) node {$\mathcal{P}$};
\draw (1,8) node {$\mathcal{P}$};
\draw (5,7) node {$\mathcal{P}$};
\draw (7,6) node {$\mathcal{P}$};
\draw (3,5) node {$\mathcal{P}$};
\draw (10,4) node {$\mathcal{P}$};
\draw (4,3) node {$\mathcal{P}$};
\draw (6,0) node {$\mathcal{P}$};

\end{tikzpicture}
\label{fig:wythP}
\end{minipage}
\begin{minipage}{.5\textwidth}
\vspace{7 mm}
 \subcaption{}
    \centering
    \begin{tikzpicture}[scale = 0.5,
         box/.style={rectangle,draw=gray!60!yellow,  thick, minimum size=.5 cm},
     ]
\foreach \x in {0,1,...,10}{
     \foreach \y in {0,1,...,10}
         \node[box] at (\x,\y){};
 }

\draw[thick, color=gray!60!yellow] (-0.55,-0.55) -- (-0.55, 10.55);
\draw[thick, color=gray!60!yellow] (-0.55, 10.55)-- (10.55, 10.55);

\draw (0, 11.2) node {$0$};
\draw (10, 11.2) node {$10$};

\draw (-1.2, 10.1) node {$0$};
\draw (-1.3, 0.1) node {$10$};

\draw (0,10) node {$\mathcal{P}$};
\draw (1,10) node {$\mathcal{P}$};

\draw (1,9) node {$\mathcal{P}$};
\draw (0,9) node {$\mathcal{P}$};
\draw (2,9) node {$\mathcal{P}$};

\draw (2,8) node {$\mathcal{P}$};
\draw (1,8) node {$\mathcal{P}$};
\draw (3,8) node {$\mathcal{P}$};

\draw (3,7) node {$\mathcal{P}$};
\draw (2,7) node {$\mathcal{P}$};
\draw (4,7) node {$\mathcal{P}$};

\draw (4,6) node {$\mathcal{P}$};
\draw (3,6) node {$\mathcal{P}$};
\draw (5,6) node {$\mathcal{P}$};

\draw (5,5) node {$\mathcal{P}$};
\draw (4,5) node {$\mathcal{P}$};
\draw (6,5) node {$\mathcal{P}$};

\draw (6,4) node {$\mathcal{P}$};
\draw (5,4) node {$\mathcal{P}$};
\draw (7,4) node {$\mathcal{P}$};

\draw (7,3) node {$\mathcal{P}$};
\draw (6,3) node {$\mathcal{P}$};
\draw (8,3) node {$\mathcal{P}$};

\draw (8,2) node {$\mathcal{P}$};
\draw (7,2) node {$\mathcal{P}$};
\draw (9,2) node {$\mathcal{P}$};

\draw (9,1) node {$\mathcal{P}$};
\draw (8,1) node {$\mathcal{P}$};
\draw (10,1) node {$\mathcal{P}$};

\draw (10,0) node {$\mathcal{P}$};
\draw (9,0) node {$\mathcal{P}$};
\end{tikzpicture}
\label{fig:yamaP}
\end{minipage}

\vspace{3mm}
\caption{The pictures represent  initial $\mathcal{P}$-positions of the rulesets in Figure~\ref{fig:nim,bishop,whythoff,yama},  (a)~$\large {\symrook}$, (b)~$\large {\symbishop}$, (c)~$\large {\symqueen}$ and (d)~$\large\rook_{\bf Y}$.}
\label{fig:nim,bishop,whythoff,yama}
\end{figure}
\clearpage
\section{New operators of combinatorial games}
\label{sec:complyoperator}
In this section, we introduce the {\em selective} and {\em enforce} 
operators. The selective operator (Definition~\ref{def:seloperator}) is the natural `dual' to our defined enforce operator (Definition~\ref{def:enfop}). Together they satisfy some nice properties (see Section~\ref{sec:distributive}). We study short rulesets and in view of our enforce operator we need to generalize this notion.\\

\begin{definition}[Jointly Short]\label{def:JShort}
Let  $A=(\X, f_A)$ and $B=(\X, f_B)$ be short rulesets on a set $\X$.    
Then $A$ and $B$ are  jointly short if the ruleset $(\X, f)$ given by, for all $\x\in \X$, $f(\x)=f_A(\x)\cup f_B(\x)$,  is short.\\ 
\end{definition}

Informally we will say $A$-options or $B$-options for these respective sets $f_A(\x)$ or $f_B(\x)$. 
In more generality, a set of short rulesets $\{A_i\}$ is {\em jointly short}, if the ruleset $(\X, f)$ given by, for all $\x\in \X$, $f(\x)=\bigcup_i f_{A_i}(\x)$,  is short.\\

\begin{definition}[Selective Operator]
\label{def:seloperator}
  Let $A$ and $B$ be jointly short rulesets on 
  a set $\X$. A selective operator on these rulesets is the ruleset $A \circledcirc B$, where,  on each turn,  
  the current player selects one of the two rulesets $A$ or $B$ and plays according to its rules.\\ 
\end{definition}
For all $\x \in \X$, we may interpret $f(\x)=f_A(\x)\cup f_B(\x)$ as the options of $A \circledcirc B$. Note that the selective operator is defined only if the rulesets are jointly short. Note that if we relax the condition that $A$ and $B$ be jointly short, even if they were individually short, the ruleset $A \circledcirc B$ might not be defined. For example, if $\X = \{(x, y)\mid x, y \in \mathbb{Z}_{\geq 0}\}$,  with $$f_A((x,y)) = \{(x', y') \mid x', y' \in \mathbb{Z}_{\geq 0}, x' < x\}, \mbox{ and } $$ 
$$f_B((x,y)) = \{(x', y') \mid x', y' \in \mathbb{Z}_{\geq 0}, y' < y\},$$ then $A$ and $B$ are short, but $A$ and $B$ are not jointly short.\\

{\sc Wythoff Nim} $\large {\symqueen}$ is a good example of a selective operator; combine $\large {\symrook}$ with $\large {\symbishop}$. See  Figure \ref{fig:sel}.\\

\begin{example}
\label{exp:selsum}
Consider the  rulesets $\large\rook$ and  $\large\rook_{\bf Y}$. Then, the options of $\large\rook \circledcirc \large\rook_{\bf Y}$ are as in Figure~\ref{fig:selective_operator}. 
The white circles represent the moves of  $\large\rook$, and the rectangles represent the moves of $\large\rook_{\bf Y}$. The current player can choose either of the rulesets, so all of them are the moves of $\large\rook \circledcirc \large\rook_{\bf Y}$.\\
\end{example}

\begin{figure}[htbp!]
\begin{center}
\begin{tikzpicture}[scale = 0.5,
         box/.style={rectangle,draw=gray!60!yellow,  thick, minimum size=.5 cm},
     ]

 \foreach \x in {0,1,...,10}{
     \foreach \y in {0,1,...,10}
         \node[box] at (\x,\y){};
 }


\draw[thick, color=gray!60!yellow] (-0.55,-0.55) -- (-0.55, 10.55);
\draw[thick, color=gray!60!yellow] (-0.55, 10.55)-- (10.55, 10.55);

\draw (0, 11.2) node {$0$};
\draw (10, 11.2) node {$10$};

\draw (-1.2, 10.1) node {$0$};
\draw (-1.3, 0.1) node {$10$};

\draw (5,2) node [draw, shape = circle, fill = black, minimum size = 0.1cm] (){};
\draw (5,3) node [draw, shape = circle, fill = white, minimum size = 0.1cm] (){};
\draw (5,4) node [draw, shape = circle, fill = white, minimum size = 0.1cm] (){};
\draw (5,5) node [draw, shape = circle, fill = white, minimum size = 0.1cm] (){};
\draw (5,6) node [draw, shape = circle, fill = white, minimum size = 0.1cm] (){};
\draw (5,7) node [draw, shape = circle, fill = white, minimum size = 0.1cm] (){};
\draw (5,8) node [draw, shape = circle, fill = white, minimum size = 0.1cm] (){};
\draw (5,9) node [draw, shape = circle, fill = white, minimum size = 0.1cm] (){};
\draw (5,10) node [draw, shape = circle, fill = white, minimum size = 0.1cm] (){};
\draw (0,2) node [draw, shape = circle, fill = white, minimum size = 0.1cm] (){};
\draw (1,2) node [draw, shape = circle, fill = white, minimum size = 0.1cm] (){};
\draw (2,2) node [draw, shape = circle, fill = white, minimum size = 0.1cm] (){};
\draw (3,2) node [draw, shape = circle, fill = white, minimum size = 0.1cm] (){};
\draw (4,2) node [draw, shape = circle, fill = white, minimum size = 0.1cm] (){};

\draw (6,4) node [draw, shape = rectangle, fill = white, minimum size = 0.1cm] (){};
\draw (6,5) node [draw, shape = rectangle, fill = white, minimum size = 0.1cm] (){};
\draw (6,6) node [draw, shape = rectangle, fill = white, minimum size = 0.1cm] (){};
\draw (6,7) node [draw, shape = rectangle, fill = white, minimum size = 0.1cm] (){};
\draw (6,8) node [draw, shape =rectangle, fill = white, minimum size = 0.1cm] (){};
\draw (6,9) node [draw, shape = rectangle, fill = white, minimum size = 0.1cm] (){};
\draw (6,10) node [draw, shape = rectangle, fill = white, minimum size = 0.1cm] (){};
\draw (0,1) node [draw, shape = rectangle, fill = white, minimum size = 0.1cm] (){};
\draw (1,1) node [draw, shape = rectangle, fill = white, minimum size = 0.1cm] (){};
\draw (2,1) node [draw, shape = rectangle, fill = white, minimum size = 0.1cm] (){};
\draw (3,1) node [draw, shape = rectangle, fill = white, minimum size = 0.1cm] (){};
\end{tikzpicture}
\end{center}
\vspace{3mm}
\caption{An example of the selective operator  as in Example~\ref{exp:selsum}, where the filled circle is the game piece $\large\symrook\circledcirc \large\symrook_{\bf Y}$.}\label{fig:selective_operator}
\end{figure}
Notice that for both $(A\circledcirc B)\circledcirc C$ and $A\circledcirc(B\circledcirc C)$, the current player selects one of $A$, $B$ and $C$, so the associative law $(A\circledcirc B)\circledcirc C=A\circledcirc(B\circledcirc C)$ holds.\\
Impartial game values are all incomparable,  with respect to the standard partial order of games. Here, we are concerned with entire rulesets and their comparisons. We will discuss the problem of ruleset domination with respect to our enforce operator. Intuitively, one ruleset dominates the other if, during play,  the second ruleset can always be blocked by the players without changing the outcome. As a consequence, the outcomes of the combined game will be the same as the outcomes of the first game.\\ 

\begin{definition}[Enforce Operator]
\label{def:enfop}
  Let $A$ and $B$ be jointly short rulesets on a set $\X$. 
  
  The enforce operator on these rulesets is the combined ruleset $A \odot B$, where 
  on each turn, before a play, the opponent enforces one of these two rulesets, and the current player plays according to that ruleset.   
That is, for all $\x \in \X$, the opponent enforces either of the set of options $f_A(\x)$ or $f_B(\x)$.\\
\end{definition}

Observe that the enforce operator belongs to the family of comply/constrain, so every enforce maneuver is forgotten after each move. 
Notice that for both $(A\odot B)\odot C$ and $A\odot(B\odot C)$, the opponent enforces one of $A$, $B$ and $C$, so the associative law $(A\odot B)\odot C=A\odot(B\odot C)$ holds.\\

\begin{definition}[Ruleset Domination]
\label{def:domination_confused_simi}
Consider two rulesets $A$ and $B$ on a set $\X$. 
If, for any $\x \in \X$, $\mathcal{O}_{A \odot B}(\x) = \mathcal{O}_A(\x)$, then ruleset $A$ dominates ruleset $B$, denoted $A\vdash B$.
If both 
$B\vdash A$ and $A\vdash B$, then the rulesets $A$ and $B$ are similar, denoted  $A \simeq B$. If neither domination holds, then the rulesets $A$ and $B$ are confused, denoted $A \parallel B$.\\

\end{definition}

\begin{example}[Dominated Ruleset]
Consider the  rulesets $\large\rook$ and   $\large\bishop$. For all $\x$, $\mathcal{O}_{\large\rook \odot \large\bishop}(\x)=\mathcal{O}_{\large\bishop}(\x)$, and hence   $\large\bishop$ dominates $\large\rook$. Namely, if the game starts at the edge of the game board, that is, if one of the coordinates is $0$, then the previous player enforces $\large\bishop$, and wins. Otherwise, the current player has a good option in both rulesets. Namely, they play to the edge of the game board   and enforce  $\large\bishop$.\\

\end{example}

\begin{example}[Confused Rulesets]\label{Ex:confused}
Consider the rulesets  $\large\rook_{\bf Y}$ and   $\large\bishop$. The initial $\mathcal P$-positions of $\large\rook_{\bf Y}\odot \large\bishop$ are depicted in Figure~\ref{tab:Yama_Corner_outcome}. 
 Suppose the starting position is $\x=(10,10)$. From Figure~\ref{fig:nim,bishop,whythoff,yama} (d) and Figure~\ref{tab:Yama_Corner_outcome} we have $  \mathcal{O}_{\large\rook_{\bf Y}}(\x) =\mathcal{P}$ and $\mathcal{O}_{\large\rook_{\bf Y} \odot \large\bishop}(\x)=\mathcal{N}$, respectively. Hence $\large\rook_{\bf Y}\not\,\vdash \large\bishop$. Consider the starting position $\x=(1,1)$.  From  Figure~\ref{fig:nim,bishop,whythoff,yama} (b) and Figure~\ref{tab:Yama_Corner_outcome} we have $  \mathcal{O}_{\large\bishop}(\x) =\mathcal{N}$ and $\mathcal{O}_{\large\rook_{\bf Y} \odot \large\bishop}(\x)=\mathcal{P}$, respectively, and hence $\large\bishop \not\,\vdash A$. Therefore $\large\rook_{\bf Y} \parallel \large\bishop$;  the rulesets are confused. 

\end{example}

\begin{figure}[htbp!]
\begin{center}
\begin{tikzpicture}[scale = 0.5,
         box/.style={rectangle,draw=gray!60!yellow,  thick, minimum size=.5 cm},
     ]

\foreach \x in {0,1,...,10}{
     \foreach \y in {0,1,...,10}
         \node[box] at (\x,\y){};
 }

\draw[thick, color=gray!60!yellow] (-0.55,-0.5) -- (-0.55, 10.55);
\draw[thick, color=gray!60!yellow] (-0.55, 10.55)-- (10.55, 10.55);

\draw (0, 11.2) node {$0$};
\draw (10, 11.2) node {$10$};

\draw (-1.2, 10.1) node {$0$};
\draw (-1.3, 0.1) node {$10$};

\draw (0,0) node {$\mathcal{P}$};
\draw (0,1) node {$\mathcal{P}$};
\draw (0,2) node {$\mathcal{P}$};
\draw (0,3) node {$\mathcal{P}$};
\draw (0,4) node {$\mathcal{P}$};
\draw (0,5) node {$\mathcal{P}$};
\draw (0,6) node {$\mathcal{P}$};
\draw (0,7) node {$\mathcal{P}$};
\draw (0,8) node {$\mathcal{P}$};
\draw (0,9) node {$\mathcal{P}$};
\draw (0,10) node {$\mathcal{P}$};

\draw (1,10) node {$\mathcal{P}$};
\draw (2,10) node {$\mathcal{P}$};
\draw (3,10) node {$\mathcal{P}$};
\draw (4,10) node {$\mathcal{P}$};
\draw (5,10) node {$\mathcal{P}$};
\draw (6,10) node {$\mathcal{P}$};
\draw (7,10) node {$\mathcal{P}$};
\draw (8,10) node {$\mathcal{P}$};
\draw (9,10) node {$\mathcal{P}$};
\draw (10,10) node {$\mathcal{P}$};

\draw (1,9) node {$\mathcal{P}$};
\end{tikzpicture}
\end{center}
\vspace{3mm}
\caption{The initial $\mathcal{P}$-positions of $\large\rook_{\bf Y} \odot \large\bishop$ as in Example~\ref{Ex:confused}. }\label{tab:Yama_Corner_outcome}
\end{figure}

\subsection{A solution of the ruleset domination problem}
 We now characterize the ruleset domination problem as in Definition~\ref{def:domination_confused_simi}, i.e., what is the necessary and sufficient condition for a ruleset $A$ to dominate a ruleset $B$?\\
\begin{definition}[Property~1]
\label{def:property1}
 The ordered pair of rulesets $(A,B)$ satisfies Property~1 if, for any $\x \in \X$, if $\mathcal{O}_A(\x) = \mathcal{N}$, then there is an option $\x' \in f_B(\x)$ such that $\mathcal{O}_A(\x') = \mathcal{P}$.\\
\end{definition}
We write $P_1(A,B)$ if $(A,B)$ satisfies Property~1. The intuition of this property, with respect to the enforce operator, is as follows: since the opponent is the enforcer, they need never enforce ruleset $B$, if $P_1(A,B)$.

The ordered pair of rulesets $(A,B)$ does not satisfy Property~1 if there exists an $\x \in \X$ such that $\mathcal{O}_A(\x) = \mathcal{N}$ and, for all $\x' \in f_B(\x)$, $\mathcal{O}_A(\x') = \mathcal{N}$. We write $P_1(A,B)^c$ if $(A,B)$ does not satisfy Property~1. To continue the intuition from the previous paragraph, in this case, the opponent could benefit by enforcing ruleset $B$.\\

\begin{theorem}[Dominated Ruleset]
\label{thm:dominated_ruleset}
Assume that $A$ and $B$ are jointly short rulesets on a set $\X$.
  Then $A \vdash B$ if and only if $P_1(A,B)$ holds.
\end{theorem}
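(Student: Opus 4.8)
The plan is to work directly from the recursive characterization of perfect-play outcomes in the enforce game, proving the two implications separately. The key preliminary observation is how $\mathcal{O}_{A\odot B}$ is computed: since the opponent is the enforcer, a position $\x$ is a $\mathcal P$-position of $A\odot B$ exactly when the opponent can enforce some ruleset $R\in\{A,B\}$ for which every option $\x'\in f_R(\x)$ is an $\mathcal N$-position of $A\odot B$ (the empty option set being the degenerate case where the current player is immediately stuck); dually, $\x$ is an $\mathcal N$-position of $A\odot B$ exactly when, for each enforcement $R\in\{A,B\}$, the current player has a reply $\x'\in f_R(\x)$ that is a $\mathcal P$-position of $A\odot B$. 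Because $A$ and $B$ are jointly short, any position-sequence produced in $A\odot B$ is a play sequence of the combined short ruleset $(\X,f_A\cup f_B)$, so $A\odot B$ is short, its outcomes are well defined, and $(\X,f_A\cup f_B)$ supplies a well-founded rank along which every option of $A\odot B$ from $\x$, lying in $f_A(\x)\cup f_B(\x)$, is strictly smaller.

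For the direction $P_1(A,B)\Rightarrow A\vdash B$, I would induct on this rank to prove $\mathcal{O}_{A\odot B}(\x)=\mathcal{O}_A(\x)$ for every $\x$, assuming the equality at all options in $f_A(\x)\cup f_B(\x)$. If $\mathcal{O}_A(\x)=\mathcal P$, then every $A$-option is an $\mathcal N$-position of $A$, hence an $\mathcal N$-position of $A\odot B$ by the induction hypothesis; the opponent therefore enforces $A$ and wins, giving $\mathcal{O}_{A\odot B}(\x)=\mathcal P$ (this branch also subsumes the terminal case $f_A(\x)=\emptyset$ and does not use Property~1). If $\mathcal{O}_A(\x)=\mathcal N$, then there is an $A$-option $\x'$ with $\mathcal{O}_A(\x')=\mathcal P$, which answers an enforcement of $A$; and Property~1 supplies a $B$-option $\x''$ with $\mathcal{O}_A(\x'')=\mathcal P$, which answers an enforcement of $B$. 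By the induction hypothesis both $\x'$ and $\x''$ are $\mathcal P$-positions of $A\odot B$, so the current player wins against either enforcement and $\mathcal{O}_{A\odot B}(\x)=\mathcal N$. This is the one step where Property~1 is essential, and it is the crux of the argument.

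For the converse $A\vdash B\Rightarrow P_1(A,B)$, no induction is needed. Assuming $\mathcal{O}_{A\odot B}(\z)=\mathcal{O}_A(\z)$ for all $\z$, take any $\x$ with $\mathcal{O}_A(\x)=\mathcal N$; then $\mathcal{O}_{A\odot B}(\x)=\mathcal N$, so in particular the current player must be able to answer the enforcement of $B$, i.e. there is $\x''\in f_B(\x)$ with $\mathcal{O}_{A\odot B}(\x'')=\mathcal P$, whence $\mathcal{O}_A(\x'')=\mathcal P$ by the global hypothesis. This is precisely the conclusion demanded by $P_1(A,B)$ at $\x$, and since $\x$ was an arbitrary $\mathcal N$-position of $A$, Property~1 holds.

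The part demanding the most care is the bookkeeping of the first implication rather than any deep idea: one must fix the enforce-game recursion with the correct quantifier alternation (the asymmetry between $\mathcal P$ and $\mathcal N$ arising from the opponent's role as enforcer), justify the well-foundedness of the induction via joint shortness, and fold the terminal/empty-option case uniformly into the $\mathcal{O}_A(\x)=\mathcal P$ branch. Once the recursion is pinned down, both directions are short, and Property~1 enters at exactly one place, namely to guarantee a winning reply to an enforced $B$ from a position that is $\mathcal N$ under $A$.
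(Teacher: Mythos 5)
Your proof is correct and takes essentially the same approach as the paper's: the same induction over the jointly short combined game for the direction $P_1(A,B)\Rightarrow A\vdash B$, with Property~1 invoked at exactly the same point (to answer an enforced $B$ from an $\mathcal N$-position of $A$), and the same use of the $\mathcal N$-position characterization of $A\odot B$ for the converse. The only cosmetic difference is that you prove the converse directly while the paper phrases it as a contradiction from $P_1(A,B)^c$; your explicit statement of the enforce-game outcome recursion and the well-foundedness bookkeeping is a slightly more careful rendering of the same argument.
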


\begin{proof}
  First, assume that $(A,B)$ satisfies Property~1.
  We need to demonstrate that $A \vdash B$.
  Assume that $\mathcal{O}_A(\x) = \mathcal{N}$. Then, under ruleset $A \odot B$, either way, the current player can move to a position $\x'$ such that $\mathcal{O}_A(\x') = \mathcal{P}$. This is because if ruleset $A$ is enforced, as $\mathcal{O}_A(\x) = \mathcal{N}$, there is a position $\x' \in f_A(\x)$ such that $\mathcal{O}_A(\x') = \mathcal{P}$, and if ruleset $B$ is enforced, Property~1 guarantees there is an $\x' \in f_B(\x)$ such that $\mathcal{O}_A(\x') = \mathcal{P}$. Thus, by induction, $\mathcal{O}_{A \odot B}(\x)=\mathcal{N}$.

  On the other hand, if $\mathcal{O}_A(\x) = \mathcal{P}$, it suffices for the previous player to enforce ruleset $A$. 
 
  Namely, for any $\x' \in f_A(\x)$,  $\mathcal{O}_A(\x') = \mathcal{N}$. Thus, by induction, $\mathcal{O}_{A \odot B}(\x)=~\mathcal{P}$.

  Hence, for all $\x$, $\mathcal{O}_A(\x)=\mathcal{O}_{A\odot B}(\x)$.

  Next, assume that $A \vdash B$, and $(A,B)$ does not satisfy Property~1.  
  By domination, and $P_1(A,B)^c$, there is a position $\x \in \X$, with $\mathcal{O}_A(\x)=\mathcal{O}_{A\odot B}(\x) = \mathcal N$, such that for any $\x' \in f_B(\x)$, $\mathcal{O}_A(\x') =\mathcal{O}_{A \odot B}(\x')= \mathcal{N}$. Thus, from this $\x$, the opponent can enforce ruleset $B$, and then the current player has to move to some $\x' \in f_B(\x)$, for which $\mathcal{O}_A(\x') = \mathcal{O}_{A \odot B}(\x') = \mathcal{N}$. Thus, the current player cannot play from the $\mathcal{N}$-position $\x$ in $A \odot B$ to a $\mathcal{P}$-position $\x'$ in $A \odot B$, a contradiction.
\end{proof}

\begin{remark}
   
By virtue of Theorem~\ref{thm:dominated_ruleset}, this is a ``survival of the weakest'', since the dominated ruleset is the stronger ruleset, and it will not be enforced by the other player. The outcomes of a stronger ruleset are here irrelevant. Hence the weaker ruleset dominates the stronger ruleset.\\
\end{remark}

\begin{corollary}
\label{Cor:domination}
Let  $A$ and $B$ be jointly short rulesets on  $\X$.  
\begin{enumerate}
\item[(1)] If $f_A(\x)\subseteq f_B(\x)$, for all $\x\in \X$, then  $A\vdash B$.

\item[(2)] If, for any $\x\in \X$ with $\mathcal{O}_B(\x)=\mathcal{P}$, $\x$ is also a $\mathcal{P}$-position for ruleset $A$, then $A \vdash B$. 

\item[(3)] $A \simeq B$ if and only if $\mathcal{O}_A(\x)=\mathcal{O}_B(\x)$.
\end{enumerate}
\end{corollary}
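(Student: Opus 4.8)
The plan is to reduce all three parts to Theorem~\ref{thm:dominated_ruleset}, which asserts that $A \vdash B$ holds exactly when $P_1(A,B)$ holds; so in parts (1) and (2) my goal is simply to verify Property~1, and in part~(3) to combine this with the definition of domination.

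For part~(1), I would take an arbitrary $\x$ with $\mathcal{O}_A(\x) = \mathcal{N}$. By the definition of an $\mathcal{N}$-position there is an $A$-option $\x' \in f_A(\x)$ with $\mathcal{O}_A(\x') = \mathcal{P}$; the hypothesis $f_A(\x) \subseteq f_B(\x)$ places this same $\x'$ in $f_B(\x)$, which is precisely what $P_1(A,B)$ requires. Theorem~\ref{thm:dominated_ruleset} then yields $A \vdash B$. This part is essentially a one-line containment check.

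For part~(2), the key is to read the hypothesis through its contrapositive: the statement ``$\mathcal{O}_B(\x)=\mathcal{P} \Rightarrow \mathcal{O}_A(\x)=\mathcal{P}$'' is equivalent to ``$\mathcal{O}_A(\x)=\mathcal{N} \Rightarrow \mathcal{O}_B(\x)=\mathcal{N}$''. So, fixing $\x$ with $\mathcal{O}_A(\x)=\mathcal{N}$, I first deduce $\mathcal{O}_B(\x)=\mathcal{N}$, whence there is a $B$-option $\x' \in f_B(\x)$ with $\mathcal{O}_B(\x') = \mathcal{P}$; applying the hypothesis to $\x'$ upgrades this to $\mathcal{O}_A(\x') = \mathcal{P}$. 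This is again exactly $P_1(A,B)$, and Theorem~\ref{thm:dominated_ruleset} closes the argument.

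For part~(3), the backward direction is immediate from part~(2): if $\mathcal{O}_A(\x)=\mathcal{O}_B(\x)$ for all $\x$, then both implications ``$\mathcal{O}_B=\mathcal{P}\Rightarrow\mathcal{O}_A=\mathcal{P}$'' and ``$\mathcal{O}_A=\mathcal{P}\Rightarrow\mathcal{O}_B=\mathcal{P}$'' hold, giving $A \vdash B$ and $B \vdash A$, i.e.\ $A \simeq B$. The forward direction carries the only subtlety. From $A \simeq B$ I have, by the definition of domination, $\mathcal{O}_A(\x)=\mathcal{O}_{A\odot B}(\x)$ and $\mathcal{O}_B(\x)=\mathcal{O}_{B\odot A}(\x)$ for every $\x$, so I need $A \odot B$ and $B \odot A$ to be the same ruleset. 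This is the commutativity of the enforce operator---in either notation the opponent chooses to enforce one of $A$, $B$ before the current player moves---which mirrors the associativity remark recorded after Definition~\ref{def:enfop}. Making that commutativity explicit is the main point to address; once it is in hand I conclude $\mathcal{O}_A(\x)=\mathcal{O}_{A\odot B}(\x)=\mathcal{O}_{B\odot A}(\x)=\mathcal{O}_B(\x)$, completing the proof.
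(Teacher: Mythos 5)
Your proposal is correct and follows essentially the same route as the paper: parts (1) and (2) verify Property~1 exactly as the paper's proof does, and part (3) derives the forward direction from the chain $\mathcal{O}_A(\x)=\mathcal{O}_{A\odot B}(\x)=\mathcal{O}_B(\x)$ and the backward direction from two applications of part (2). The only difference is that you make explicit the commutativity $A\odot B = B\odot A$ (immediate from Definition~\ref{def:enfop}, since the opponent's choice between the two rulesets is unordered), a point the paper uses silently here and only records as obvious in Section~\ref{sec:distributive}.
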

\begin{proof} (1) By Theorem~\ref{thm:dominated_ruleset}, it suffices to find a $B$-option $\x'$ such that $\mathcal{O}_A(\x')=\mathcal{P}$, whenever $\mathcal{O}_A(\x)=\mathcal{N}$. Indeed, the existence follows from $f_A(\x)\subseteq f_B(\x)$.\\
(2) For any $\x \in \X$ with $\mathcal{O}_{A}(\x)=\mathcal{N}$, by our assumption, we have $\mathcal{O}_{B}(\x)=\mathcal{N}$; hence there is some option $\x'\in f_B(\x)$ with $\mathcal{O}_B(\x')=\mathcal{P}$. This implies $\mathcal{O}_A(\x')=\mathcal{P}$; hence  $P_1(A, B)$ holds.\\
(3) When $A \vdash B$ and $B\vdash A$, for any $\x\in \X$, we have $\mathcal{O}_A(\x)=\mathcal{O}_{A\odot B}(\x)=\mathcal{O}_B(\x)$, and hence $\mathcal{O}_A(\x)=\mathcal{O}_B(\x)$.  Conversely if $\mathcal{O}_A(\x)=\mathcal{O}_B(\x)$, then by (2), $A \vdash B$ and $B \vdash A$; hence $A\simeq B$.
\end{proof}

\begin{example}
  Consider two rulesets  $\rook_{\bf Y}$ and   $\rook$. If $\mathcal{O}_{\rook_{\bf Y}}((x, y)) = \mathcal{N}$, we may assume $x-y \geq 2$, and one can move to $(x-(x - y), y)=(y, y) \in f_{\rook}((x,y))$  which is a  $\mathcal{P}$-position of ruleset $\rook_{\bf Y}$. Hence, $P_1\left(\rook_{\bf Y}, \rook\right)$ holds, $\rook_{\bf Y} \vdash \rook$, and $\mathcal{O}_{\rook_{\bf Y} \odot \rook}((x, y))=\mathcal{O}_{\rook_{\bf Y}}((x, y))$ for any $(x,y)$. The ruleset $\rook_{\bf Y}$ also dominates  $\queen$, by the same move.\\
\end{example}

\subsection{No Transitivity}
We find a contradiction in the transitivity of ruleset domination.\\

\begin{proposition}
\label{prop:rulesetdomination}
 The binary ruleset relation of domination 
 does not satisfy the transitive law.
\end{proposition}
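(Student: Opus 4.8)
The plan is to refute transitivity by exhibiting three jointly short rulesets $A$, $B$, $C$ on a common board $\X$ with $A \vdash B$ and $B \vdash C$, but $A \not\vdash C$. The whole argument is driven by Theorem~\ref{thm:dominated_ruleset}: domination is equivalent to Property~1, and $P_1(A,B)$ depends only on the \emph{outcome function} $\mathcal{O}_A$ and the \emph{option map} $f_B$. The point I would stress is that $P_1(A,B)$ reads off $\mathcal{O}_A$ whereas $P_1(B,C)$ reads off $\mathcal{O}_B$, and these two outcome functions need not agree. This mismatch is precisely the mechanism to exploit: there is no a priori reason for the two hypotheses to force $P_1(A,C)$.

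The design I would use hinges on a single distinguished position $s$ that is an $\mathcal{N}$-position of $A$, a $\mathcal{P}$-position of $B$, and \emph{terminal} under $C$, i.e. $f_C(s) = \emptyset$. Then $A \not\vdash C$ is immediate: $\mathcal{O}_A(s) = \mathcal{N}$, yet $f_C(s)$ contains no $\mathcal{P}_A$-option since it is empty, so $P_1(A,C)$ fails at $s$. Because $s$ is a $\mathcal{P}$-position of $B$, it imposes no requirement on $P_1(B,C)$, so $C$ being terminal there does no harm. Finally, $P_1(A,B)$ can still be arranged at $s$ by giving $s$ a $B$-move into a position that is simultaneously $\mathcal{P}$ for $A$ and $\mathcal{N}$ for $B$; such a position is consistent, and it is exactly what makes $\mathcal{O}_A(s)$ and $\mathcal{O}_B(s)$ legitimately differ.

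Concretely I would take a tiny acyclic board, e.g. $\X = \{0,1,2\}$ with $0$ a common terminal, and set $f_A(1) = \emptyset$, $f_A(2) = \{0\}$; $f_B(1) = \{0\}$, $f_B(2) = \{1\}$; and $f_C(1) = \{0\}$, $f_C(2) = \emptyset$. A direct recursion gives $\mathcal{P}_A = \{0,1\}$ and $\mathcal{P}_B = \mathcal{P}_C = \{0,2\}$, so the unique $\mathcal{N}_A$-position $2$ has the $B$-option $1 \in \mathcal{P}_A$ (hence $P_1(A,B)$), the unique $\mathcal{N}_B$-position $1$ has the $C$-option $0 \in \mathcal{P}_B$ (hence $P_1(B,C)$), while the $\mathcal{N}_A$-position $s = 2$ is $C$-terminal (hence $P_1(A,C)$ fails). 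Joint shortness is automatic because every move strictly decreases the label. By Theorem~\ref{thm:dominated_ruleset} this yields $A \vdash B$, $B \vdash C$, and $A \not\vdash C$.

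I expect the only real work to be bookkeeping: checking that the prescribed $\mathcal{P}/\mathcal{N}$ labels are genuinely the recursively computed outcomes of the chosen option maps (self-consistency of $\mathcal{O}_A$ and $\mathcal{O}_B$), and that the rulesets are jointly short. On a finite acyclic board both are routine. The genuine obstacle is the initial design—simultaneously satisfying $P_1(A,B)$ and $P_1(B,C)$ while forcing $P_1(A,C)$ to fail—which the $\mathcal{N}_A$/$\mathcal{P}_B$/$C$-terminal position $s$ resolves in one stroke.
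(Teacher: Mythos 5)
Your proof is correct, and it reaches the conclusion by a genuinely different construction than the paper's. The paper also argues via Theorem~\ref{thm:dominated_ruleset}, but its counterexample lives on the infinite board $\X=\mathbb{Z}_{\geq 0}$ with three natural subtraction-type rulesets (subtract $1$; subtract any odd number; subtract any even number), yielding the chain $C\vdash B\vdash A$ with $C\not\vdash A$; there the failure of $P_1(C,A)$ occurs at $x=3$, whose unique $A$-option $2$ is an $\mathcal{N}$-position of $C$, so the failing position still has options --- they are just all bad. You instead build a minimal three-position board and make the failing position \emph{terminal} under the third ruleset ($f_C(2)=\emptyset$), so $P_1(A,C)$ fails vacuously: the $\mathcal{N}_A$/$\mathcal{P}_B$/$C$-terminal position does all the work in one stroke, and every verification is a finite check (your outcome computations $\mathcal{P}_A=\{0,1\}$, $\mathcal{P}_B=\mathcal{P}_C=\{0,2\}$, and the three $P_1$ checks, are all correct, and joint shortness is immediate since moves decrease the label). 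What the paper's version buys is that the counterexample consists of recognizable, homogeneous rulesets, and along the way it exhibits the stronger phenomenon that transitivity fails even when one of the two dominations is a similarity ($A\simeq B$), which feeds the subsequent remark that $A\parallel C$; what your version buys is minimality and a transparent statement of the underlying mechanism --- that $P_1(A,B)$ couples $\mathcal{O}_A$ with $f_B$ while $P_1(B,C)$ couples $\mathcal{O}_B$ with $f_C$, so the hypotheses constrain disjoint data and cannot force $P_1(A,C)$. Both are complete proofs of Proposition~\ref{prop:rulesetdomination}.
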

\begin{proof}
 Let $\X=\mathbb{Z}_{\geq 0}$ and consider the following rulesets.\\
 
    Ruleset $A$: $f_A(x)=\left\{
  \begin{array}{ll}
    \emptyset, & \text{if } x =0,\\
     \{x-1\},& \text{otherwise}.
  \end{array}
  \right. $\\
    
Ruleset $B$: $f_B(x)=\{x-(2i+1) \mid i \in \mathbb{Z}, 0 \leq i \leq \frac{x-1}{2} \}$. \\
  
Ruleset $C$: $f_C(x)=\{x-2i \mid i \in \mathbb{Z}, 1 \leq i \leq \frac{x}{2} \}$.\\

That is, for ruleset $B$, any odd number less than or equal to $x$ can be subtracted, and for ruleset $C$, any even number less than or equal to $x$ can be subtracted. 

\begin{table}[htbp!]

\scalebox{1.2}{
\def\arraystretch{1.2}
\begin{tabular}{|c|c|c|c|c|c|c|c|c|c|c|c|}
\hline
$x$ & $0$ & $1$ & $2$ & $3$ & $4$ & $5$ & $6$ & $7$ & $8$ & $9$ & $10$ \\ \hline
$\mathcal{O}_{A}(x)$ & $\mathcal{P}$ & $\mathcal{N}$ & $\mathcal{P}$ & $\mathcal{N}$ & $\mathcal{P}$ & $\mathcal{N}$ & $\mathcal{P}$ & $\mathcal{N}$ & $\mathcal{P}$ & $\mathcal{N}$ & $\mathcal{P}$   \\ \hline
$\mathcal{O}_{B}(x)$ & $\mathcal{P}$ & $\mathcal{N}$ & $\mathcal{P}$ & $\mathcal{N}$ & $\mathcal{P}$ & $\mathcal{N}$ & $\mathcal{P}$ & $\mathcal{N}$ & $\mathcal{P}$ & $\mathcal{N}$ & $\mathcal{P}$   \\ \hline
$\mathcal{O}_{ C}(x)$ & $\mathcal{P}$ & $\mathcal{P}$ & $\mathcal{N}$ & $\mathcal{N}$ & $\mathcal{N}$ & $\mathcal{N}$ & $\mathcal{N}$ & $\mathcal{N}$ & $\mathcal{N}$ & $\mathcal{N}$ & $\mathcal{N}$   \\ 
\hline
$\mathcal{O}_{A \odot B}(x)$ & $\mathcal{P}$ & $\mathcal{N}$ & $\mathcal{P}$ & $\mathcal{N}$ & $\mathcal{P}$ & $\mathcal{N}$ & $\mathcal{P}$ & $\mathcal{N}$ & $\mathcal{P}$ & $\mathcal{N}$ & $\mathcal{P}$   \\ \hline

$\mathcal{O}_{A \odot C}(x)$ & $\mathcal{P}$ & $\mathcal{P}$ & $\mathcal{N}$ & $\mathcal{P}$ & $\mathcal{N}$ & $\mathcal{P}$ & $\mathcal{N}$ & $\mathcal{P}$ & $\mathcal{N}$ & $\mathcal{P}$ & $\mathcal{N}$   \\ 
\hline

$\mathcal{O}_{B \odot C}(x)$ & $\mathcal{P}$ & $\mathcal{P}$ & $\mathcal{N}$ & $\mathcal{N}$ & $\mathcal{N}$ & $\mathcal{N}$ & $\mathcal{N}$ & $\mathcal{N}$ & $\mathcal{N}$ & $\mathcal{N}$ & $\mathcal{N}$  \\
\hline

\end{tabular}
}

\vspace{3mm}
\caption{The rows illustrate the first few outcomes of the rulesets $A$, $B$,  $C$,  and  enforce operators   $A \odot B$, $A \odot C$, $B \odot C$,  with $A$, $B$ and $C$ as in the proof of Proposition~\ref{prop:rulesetdomination}. 
}
\label{tab:rulesetA,B,Coutcome}
\end{table}

  From Table~\ref{tab:rulesetA,B,Coutcome}, we have $\mathcal{O}_A(x)=\mathcal{O}_B(x)$, hence $A \simeq B$. In particular $B\vdash A$.
 
 Also, if $\mathcal{O}_C(x)=\mathcal{N}$, we may assume $x \geq 2$, and under ruleset $B$, a player can move to $(x-(2j+1)) \in f_B(x)$, with $j=[\frac{x-1}{2}]$, which is a $\mathcal{P}$-position of ruleset $C$. Hence $C\vdash B$, and so $C\vdash B\vdash A$. 
 However, $P_1(C,A)$ fails to hold. 
 Namely, $\mathcal{O}_C(3) = \mathcal{N}$ but $f_A(3)=\{2\}$ and $\mathcal{O}_C(2) = \mathcal{N}$. Therefore, by Theorem~\ref{thm:dominated_ruleset}, 
 $C\not\,\vdash A$, and transitivity fails. 
\end{proof}
Similarly, one can see that in fact $A \parallel C$, which leads us to the next section.

\subsection{Order Properties}
Domination is not an order, but the following result suggests that there may be some order-like properties for domination.\\

\begin{theorem}
\label{thm_sym}
Consider jointly short rulesets $A$, $B$ and $C$ such that $A\vdash B$, $B\vdash C$ and $C\vdash A$. 
Then the rulesets $A$, $B$ and $C$ are similar.
\end{theorem}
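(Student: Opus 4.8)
The plan is to reduce the statement to an equality of outcome functions and then run a single well-founded induction that exploits the cyclic structure of the hypotheses. First, by Corollary~\ref{Cor:domination}(3), proving $A \simeq B \simeq C$ is the same as proving $\mathcal{O}_A(\x) = \mathcal{O}_B(\x) = \mathcal{O}_C(\x)$ for every $\x \in \X$, so it suffices to establish this triple equality. By Theorem~\ref{thm:dominated_ruleset}, the three domination hypotheses $A\vdash B$, $B\vdash C$, $C\vdash A$ are exactly the properties $P_1(A,B)$, $P_1(B,C)$ and $P_1(C,A)$. Since $A$, $B$ and $C$ are jointly short, the union ruleset $(\X, f_A\cup f_B\cup f_C)$ is short; let $T(\x)$ denote its shortness bound. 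Every $A$-, $B$- or $C$-option of $\x$ is a move in this union and hence has strictly smaller $T$-value, so $T$ provides a well-founded rank on which to induct.

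For the inductive step, fix $\x$ and assume $\mathcal{O}_A(\y) = \mathcal{O}_B(\y) = \mathcal{O}_C(\y) =: o(\y)$ for every $\y$ with $T(\y) < T(\x)$; in particular this common value $o(\y)$ is defined at all options of $\x$. Writing $\mathcal{P}_o$ for the set of positions $\y$ with $o(\y) = \mathcal{P}$, the normal-play recursion gives, for each ruleset $R \in \{A,B,C\}$, that $\mathcal{O}_R(\x) = \mathcal{N}$ if and only if $f_R(\x)$ meets $\mathcal{P}_o$. Now I read each $P_1$ condition at $\x$ through the induction hypothesis: $P_1(A,B)$ says that if $f_A(\x)$ meets $\mathcal{P}_o$ then $f_B(\x)$ meets $\mathcal{P}_o$ (the witnessing $B$-option has smaller $T$-value, so its $A$-outcome $\mathcal{P}$ equals its $o$-value); likewise $P_1(B,C)$ yields that $f_B(\x)$ meeting $\mathcal{P}_o$ implies $f_C(\x)$ does, and $P_1(C,A)$ that $f_C(\x)$ meeting $\mathcal{P}_o$ implies $f_A(\x)$ does. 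These three implications close into a cycle, so the predicates ``$f_A(\x)$ meets $\mathcal{P}_o$'', ``$f_B(\x)$ meets $\mathcal{P}_o$'', ``$f_C(\x)$ meets $\mathcal{P}_o$'' are pairwise equivalent; hence $\mathcal{O}_A(\x) = \mathcal{O}_B(\x) = \mathcal{O}_C(\x)$, completing the induction. Terminal positions are subsumed: when all three option sets are empty no predicate holds and all three outcomes are $\mathcal{P}$, so no separate base case is needed.

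The genuine content — and the only nontrivial move — is the observation that the cyclic chain of $P_1$-implications collapses into a chain of equivalences; everything else is bookkeeping. The one setup subtlety to get right is that all options of $\x$ must sit strictly below $\x$ in a \emph{single} well-founded order shared by the three rulesets, which is precisely what joint shortness of the triple supplies; without it one could not transport the three $P_1$ conditions to a common outcome value $o$ on the options, and the cyclic collapse would not be available. Once the equality of the three outcome functions is established, Corollary~\ref{Cor:domination}(3) immediately gives $A \simeq B \simeq C$, as required.
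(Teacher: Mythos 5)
Your proposal is correct and is essentially the paper's own argument in a different wrapper: the paper also works with the shortness bound of $A \circledcirc B \circledcirc C$ as the common well-founded rank, translates each domination into Property~1 (Theorem~\ref{thm:dominated_ruleset}), and propagates $\mathcal{N}$-membership around the cycle $A\vdash B\vdash C\vdash A$ at a position of minimal rank where the three outcomes disagree. The only difference is presentational — you run a direct well-founded induction establishing $\mathcal{O}_A=\mathcal{O}_B=\mathcal{O}_C$ everywhere (closing with Corollary~\ref{Cor:domination}(3)), while the paper phrases the same step as a minimal-counterexample contradiction on the disagreement set $M$.
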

\begin{proof}
Let $E = A \circledcirc B \circledcirc C$, which is short by assumption.

Let $N_A, N_B, N_C \subset \X$ be the sets of $\mathcal{N}$-positions for the ruleset $A$, $B$, and $C$ respectively. Similarly, we let $P_A, P_B, P_C \subset \X$ be the set of $\mathcal{P}$-positions for $A$, $B$, and $C$.

Let us take $M = (N_A \cup N_B \cup N_C) \setminus (N_A \cap N_B \cap N_C)$.
Notice that if we can show that $M$ is
empty, then it implies $N_A = N_B = N_C$, and hence $A$, $B$, and $C$ are similar. Let us
assume that $M$ is non-empty, and $\x \in M$ is an element with a minimal value of $T_E(\x)$. By symmetry, we
may assume that $\x \in N_A$. As $A \vdash B$, by  Definition~\ref{def:property1}, there
must be some option $\y \in f_B(\x) \cap P_A$.
As $\x$ is a minimal in $M, \y \not \in M$, and together
with $\y \not \in N_A$, we can conclude $\y \in P_B$. Now because $\x$ has a $B$-option $\y \in P_B$, we
know that $\x \in N_B$. Then by a similar argument, we have $\x \in N_C$, which contradicts our assumption $\x \in M$. Therefore $M$ must be empty.
\end{proof}

Theorem~\ref{thm_sym}  suggests that domination is very close to an order relation.\\

\begin{definition}[Strong Domination]
\label{def:strongdomi}  
Consider jointly short rulesets $A$ and $B$. Then $A$ strongly dominates $B$, if for any jointly short ruleset $C$, $A \odot B \odot C \simeq A \odot C$.\\

\end{definition}

\begin{proposition}
\label{prop:strongdomination}

Consider jointly short rulesets. 
Strong domination satisfies the following properties.
\begin{enumerate}
\item [($\alpha$)] If ruleset  $A$ strongly dominates ruleset  $B$, then   $A \vdash B$.

\item [($\beta$)] If ruleset $A$ strongly dominates ruleset  $B$ and ruleset $B$ strongly dominates ruleset $C$, then ruleset $A$ strongly dominates ruleset $C$.
\item [($\gamma$)] If ruleset $A$ strongly dominates ruleset $B$ and ruleset $B$ strongly dominates ruleset $C$, then $A \vdash C $.
\item [($\delta$)] When $f_A(\x) \subset f_B(\x)$ for all $\x \in \X$, then ruleset $A$ strongly dominates ruleset $B$. 

\end{enumerate}
\end{proposition}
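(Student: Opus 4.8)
The plan is to lean on two tools already in hand: the characterization of domination in Theorem~\ref{thm:dominated_ruleset} ($A\vdash B \iff P_1(A,B)$) and the outcome-equality description of similarity in Corollary~\ref{Cor:domination}(3) ($A\simeq B \iff \mathcal O_A=\mathcal O_B$). Before touching the four parts, I would record the algebraic identities of the enforce operator that make the bookkeeping clean. Beyond the associativity already noted, $\odot$ is \emph{commutative} ($A\odot B=B\odot A$, since the opponent merely selects a ruleset from an unordered collection) and \emph{idempotent} ($A\odot A=A$, and more generally $A\odot A\odot B=A\odot B$, because enforcing from a collection is unaffected by duplicate rulesets). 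I will also use that $\simeq$ is an equivalence relation, in particular transitive, which is immediate from Corollary~\ref{Cor:domination}(3) since $A\simeq B$ just means $\mathcal O_A=\mathcal O_B$.

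For ($\alpha$) I would test strong domination against the ruleset $A$ itself. This yields $A\odot B\odot A\simeq A\odot A$; by the identities above the left side is $A\odot B$ and the right side is $A$, so $A\odot B\simeq A$, which by Corollary~\ref{Cor:domination}(3) is exactly $\mathcal O_{A\odot B}=\mathcal O_A$, i.e. $A\vdash B$. For ($\delta$), fix an arbitrary jointly short test ruleset $C$ and set $D=A\odot C$; by commutativity and associativity $A\odot B\odot C=D\odot B$, so it suffices to prove $D\vdash B$, and by Theorem~\ref{thm:dominated_ruleset} it suffices to check $P_1(D,B)$. Suppose $\mathcal O_D(\x)=\mathcal N$. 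Since in $D=A\odot C$ the opponent may enforce $A$, the current player must still have a winning reply against that enforcement, so there is $\x'\in f_A(\x)$ with $\mathcal O_D(\x')=\mathcal P$; because $f_A(\x)\subseteq f_B(\x)$, this $\x'$ is a $B$-option, and Property~1 (Definition~\ref{def:property1}) holds. Hence $D\odot B\simeq D$, that is $A\odot B\odot C\simeq A\odot C$, and since $C$ was arbitrary, $A$ strongly dominates $B$.

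For ($\beta$) I would let $E$ be an arbitrary jointly short test ruleset and aim for $A\odot C\odot E\simeq A\odot E$, chaining three instances of the hypotheses (using $E$ for the dummy test ruleset to keep it distinct from the named third ruleset $C$). Applying ``$A$ strongly dominates $B$'' with test ruleset $C\odot E$ gives $A\odot B\odot C\odot E\simeq A\odot C\odot E$; applying ``$B$ strongly dominates $C$'' with test ruleset $A\odot E$ gives, after commuting, $A\odot B\odot C\odot E\simeq A\odot B\odot E$; and applying ``$A$ strongly dominates $B$'' once more with test ruleset $E$ gives $A\odot B\odot E\simeq A\odot E$. Transitivity of $\simeq$ then yields $A\odot C\odot E\simeq A\odot E$, so $A$ strongly dominates $C$; joint shortness of each intermediate combination follows from joint shortness of the union of all the option maps. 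Part ($\gamma$) is then immediate: ($\beta$) gives that $A$ strongly dominates $C$, and ($\alpha$) upgrades this to $A\vdash C$.

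The hard part will be ($\delta$): the one genuinely non-formal step is reading the enforce-game outcome correctly, namely that $\mathcal O_{A\odot C}(\x)=\mathcal N$ forces a winning reply against \emph{every} enforceable ruleset, in particular against $A$, which is what produces the required $B$-option from an $A$-option through $f_A\subseteq f_B$. Everything else is bookkeeping — invoking idempotence and commutativity of $\odot$, keeping the definition's dummy test ruleset separate from the indexed rulesets in ($\beta$) and ($\gamma$), and verifying joint shortness of each combined ruleset appearing in the chain.
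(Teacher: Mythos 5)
Your proposal is correct and follows the paper's proof essentially step for step: ($\alpha$) by taking the test ruleset to be $A$ itself (the paper literally says ``set $C=A$''), ($\beta$) by the same chain of similarities through $A\odot B\odot C\odot D\simeq A\odot B\odot D\simeq A\odot D$ using associativity and commutativity, ($\gamma$) by combining the first two, and ($\delta$) by the observation that with $f_A(\x)\subseteq f_B(\x)$ the opponent gains nothing by enforcing $B$. Your ($\delta$) is in fact more rigorous than the paper's one-line remark; the only caveat worth noting is that invoking Theorem~\ref{thm:dominated_ruleset} with the compound $D=A\odot C$ in the first slot technically exceeds that theorem's statement (which assumes a plain ruleset with a single option map), but the theorem's induction extends verbatim to this setting and you state the key step --- a winning reply to every enforcement, in particular to $A$ --- explicitly, so this is a presentational point rather than a gap.
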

\begin{proof} 
We prove each property.
\begin{enumerate}
\item[($\alpha$)] Set $C = A$ in the Definition~\ref{def:strongdomi}.

\item[($\beta$)]
For any ruleset $D$,

\begin{align*}
    (A \odot C) \odot D &\simeq  ((A \odot B) \odot C) \odot D ~&~ (A \text{ strongly dominates } B )\\ 
 &\simeq A \odot ((B \odot C) \odot D)~ &(\text{associativity})  \\
&\simeq A \odot (B \odot D)~ &(B \text{ strongly dominates } C)  \\
&\simeq (A \odot B) \odot D &~(\text{associativity})  \\
&\simeq A \odot D &~\hbox{ ($A$ strongly dominates $B$})
\nonumber
 \end{align*}
\item [($\gamma$)] 

This is immediate from ($\alpha$) and ($\beta$).

\item [($\delta$)] As $f_A(\x) \subset f_B(\x)$ for any $\x$, the opponent has no merit for enforcing the ruleset $B$ over $A$.
\end{enumerate}
\end{proof}
\section{A distributive-lattice-like structure}
\label{sec:distributive}
Let us establish some more properties of our operators.\\
Note that the selective operator is not a disjunctive sum operator. In a disjunctive sum of games, the current player chooses one of the components, but in $A \circledcirc B$, the component is single and the player chooses one of the rulesets. In fact, this is the same as playing the combined rulesets $A$ and $B$. However, by regarding it as an operator, we can find a distributive-lattice-like structure on rulesets. For distributive lattice, it is required that, for any  rulesets $A, B$ on $\X$, if $A \simeq B$ then for any ruleset $C$ on $\X$, $A \odot C \simeq B \odot C$. However, due to Proposition \ref{prop:rulesetdomination}, we have a counterexample. Thus, we can only say that we have a {\it distributive-lattice-like}  construction.

Let $E$ be a ruleset on a set $\X$ such that for any non-terminal  $\x \in \X$, $f_E(\x)$ includes the terminal position. Clearly, $\mathcal{O}_E(\x) = \mathcal{P}$ if and only if $\x$ is a terminal position. Then, for any ruleset $A$ on $\X$, $P_1(A, E)$ holds.

Therefore, $$A \simeq A \odot E \simeq E \odot A$$ and $E$ is an identity element of $\odot$.
Furthermore, for any ruleset $A$, $$E \simeq A \circledcirc E \simeq E \circledcirc A$$ because the current player can win from any position except for the terminal position by selecting $E$. Thus, $E$ is an absorbing element of $\circledcirc$.

Let $O$ be a ruleset on a set $\X$ such that for any $\x \in \X$, $f_O(\x)$ is the empty set.
Clearly, $\mathcal{O}_O(\x) = \mathcal{P}$ for any $\x$.
Then, for any ruleset $A$ on $\X$, $P_1(O, A)$ holds.
Therefore, $$O \simeq A \odot O \simeq O \odot A$$ and $O$ is an absorbing element of $\odot$.
Furthermore, for any ruleset $A$, $$A \simeq A \circledcirc O \simeq O \circledcirc A$$ because the current player can only select $A$ for the move.

Thus, $O$ is an identity element of $\circledcirc$.

Obviously, the operators $\odot$ and $\circledcirc$ satisfy the commutative law and the associative law. 

Further, we also have the following theorem.\\

\begin{theorem}[Absorption-Distributive]
\label{thm:Absorption-Distributive}
Suppose $A$, $B$ and $C$ are the rulesets on a set $\X$. The two operators $\odot$ and $\circledcirc$ satisfy the absorption law: 

\begin{eqnarray}
(A \circledcirc B) \odot A &\simeq& A; \nonumber \\
    (A \odot B) \circledcirc A &\simeq& A;\nonumber
\end{eqnarray}
and the distributive law: 
\begin{eqnarray}
    (A \circledcirc B)\odot C &\simeq& (A \odot C) \circledcirc (B \odot C); \nonumber \\
    (A \odot B) \circledcirc C &\simeq& (A \circledcirc C) \odot (B \circledcirc C) \nonumber.
\end{eqnarray}

\end{theorem}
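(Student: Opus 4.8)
The plan is to reduce every one of the four claimed similarities to a statement about outcomes, since by Corollary~\ref{Cor:domination}(3) we know that two rulesets are similar if and only if they induce the same outcome function. So for each identity $L \simeq R$, it suffices to show $\mathcal{O}_L(\x) = \mathcal{O}_R(\x)$ for all $\x \in \X$. I would set this up once and then handle the four cases, each by a symmetric induction on the termination bound $T(\x)$ of the jointly short combined ruleset, using the recursive characterization of $\mathcal{P}$- and $\mathcal{N}$-positions. Throughout I will use the informal readings: in $A \circledcirc B$ the \emph{current player} picks which of the option sets $f_A(\x), f_B(\x)$ to move in, while in $A \odot B$ the \emph{opponent} picks which set the current player must move in.

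First I would dispatch the two absorption laws by a direct outcome argument rather than full induction. For $(A \circledcirc B) \odot A \simeq A$: observe that the combined option set of $A \circledcirc B$ is $f_A(\x) \cup f_B(\x) \supseteq f_A(\x)$, so $f_A(\x) \subseteq f_{A \circledcirc B}(\x)$ for all $\x$. By Corollary~\ref{Cor:domination}(1) this gives $A \vdash (A \circledcirc B)$, i.e.\ $\mathcal{O}_{A \odot (A\circledcirc B)}(\x) = \mathcal{O}_A(\x)$; after rewriting with commutativity of $\odot$ this is exactly the first absorption law. The second law $(A \odot B) \circledcirc A \simeq A$ should follow by the dual observation that in the selective combination the current player may always elect to play the ruleset $A$ inside $A \odot B$ (the current player controls the $\circledcirc$-choice, then must survive the $\odot$-enforcement), so that the $A$-options are always available and no worse; I would make this precise by showing $\mathcal{O}$ agrees with $A$ directly, tracking that the current player never benefits from the extra $B$-enforcement option being offered to the opponent.

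The substance of the theorem is the distributive law, and this is where I expect the main obstacle. Take $(A \circledcirc B)\odot C \simeq (A \odot C) \circledcirc (B \odot C)$. On the left, the opponent first enforces either $A\circledcirc B$ or $C$; if they enforce $A\circledcirc B$, the current player then selects $A$ or $B$. On the right, the current player first selects either $A\odot C$ or $B\odot C$, and only afterwards does the opponent enforce the chosen pair. The difficulty is that the \emph{order of the two choices} (opponent's enforcement versus current player's selection) is swapped between the two sides, so they are not literally the same game tree; I must argue that the reachable option multiset, and hence the outcome, nevertheless coincides. I would prove this by analyzing the available single-move targets. On the left the current player can reach $f_C(\x)$ for sure (if $C$ is enforced) and can reach $f_A(\x)$ or $f_B(\x)$ only when the opponent enforces $A\circledcirc B$; on the right, by choosing $A\odot C$ the current player faces the opponent's choice between $f_A(\x)$ and $f_C(\x)$, and similarly for $B \odot C$. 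The key lemma I would isolate is that a position is an $\mathcal{N}$-position on each side precisely when the current player has, against every enforcement the opponent controls, some move to a position that is a $\mathcal{P}$-position on that same side; I would then verify by induction on $T(\x)$ that these $\mathcal{N}$-conditions are logically equivalent across the two sides, the crux being that ``$C$ enforced, or ($A\circledcirc B$ enforced with a good $A$- or $B$-reply)'' matches ``good reply in $A\odot C$ or good reply in $B\odot C$'' once one unwinds the quantifier alternation. The second distributive identity is the exact dual (swap the roles of $\odot$ and $\circledcirc$, hence of opponent and current player), and I would obtain it by the mirror-image argument.

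I expect the quantifier bookkeeping in this equivalence to be the genuinely delicate point: because $\odot$ is a universal (opponent-enforced) operator and $\circledcirc$ is an existential (player-selected) operator, distributing one through the other is essentially distributing a $\forall$ over an $\exists$, which is only valid in one direction in general. The reason it succeeds here is that the outcome is a two-valued quantity and the induction hypothesis lets me replace ``leads to a $\mathcal{P}$-position'' by a fixed predicate on positions of strictly smaller $T$-value, collapsing the alternation; I would be careful to state the induction over the common short ruleset $A \circledcirc B \circledcirc C$ so that all six sub-rulesets terminate and the recursion is well-founded.
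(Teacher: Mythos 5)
Your proposal is correct, and it is essentially a rigorized version of the paper's argument rather than a different decomposition; the comparison is still worth making. The paper proves absorption with one-line strategic remarks (``the opponent/current player has no benefit to choose the compound'') and proves distributivity by a WLOG case analysis on outcomes, phrased in terms of ``winning moves,'' which silently identifies $\mathcal{P}$-positions across the two compound games without an explicit induction. You supply exactly the missing scaffolding: reducing $\simeq$ to equality of outcome functions via Corollary~\ref{Cor:domination}(3), inducting on the termination bound of $A\circledcirc B\circledcirc C$ so that all sub-rulesets recurse on strictly smaller positions, and deriving the first absorption law from Corollary~\ref{Cor:domination}(1) via $f_A(\x)\subseteq f_{A\circledcirc B}(\x)$ together with commutativity of $\odot$ --- a cleaner justification than the paper's informal remark. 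Moreover, the ``quantifier alternation'' you flag as the delicate point is in fact tamer than you suggest, and noting this would simplify your write-up: since each operator offers only two choices, once the induction hypothesis identifies the $\mathcal{P}$-predicate on options, each side's $\mathcal{N}$-condition is a finite Boolean combination of the three predicates $\exists\,\x'\in f_X(\x)$ with $\mathcal{O}(\x')=\mathcal{P}$ for $X\in\{A,B,C\}$; the first distributive law reduces to
$\bigl(\alpha\vee\beta\bigr)\wedge\gamma \equiv (\alpha\wedge\gamma)\vee(\beta\wedge\gamma)$
and the second to its dual, i.e.\ to propositional distributivity of $\wedge$ over $\vee$, not to any genuine $\forall/\exists$ exchange. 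The same observation closes the one soft spot in your proposal, the second absorption law, which you left at the paper's level of informality (``the current player never benefits''): its $\mathcal{N}$-condition is $\bigl[(\exists_{f_A}\mathcal{P})\wedge(\exists_{f_B}\mathcal{P})\bigr]\vee(\exists_{f_A}\mathcal{P})\equiv(\exists_{f_A}\mathcal{P})$ by Boolean absorption, and the first absorption law is likewise $(\exists_{f_A\cup f_B}\mathcal{P})\wedge(\exists_{f_A}\mathcal{P})\equiv(\exists_{f_A}\mathcal{P})$. In short, the paper's proof is shorter and reads strategically; yours buys well-foundedness and a uniform treatment in which all four identities become instances of the Boolean absorption and distributive laws under a single induction.
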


\begin{proof}
First, we show the absorption law. In $(A \circledcirc B) \odot A, $ the opponent has no benefit to choose $(A \circledcirc B)$. Thus, $(A \circledcirc B) \odot A \simeq A$. 

In $(A \odot B) \circledcirc A,$  the current player has no benefit to choose $(A \odot B)$. Thus, $(A \odot B) \circledcirc A \simeq A$.

Next, we demonstrate the distributive law. 

Consider first the case that  $\x$ is an $\mathcal{N}$-position under $(A \circledcirc B)\odot C$. This means that the current player has a winning move in one of $A$ and $B$ and in $C$. Without loss of generality, we may assume that the player has winning moves in $A$ and in $C$. Then, playing from $\x$, under $(A \odot C) \circledcirc (B \odot C),$ the current player can select $A \odot C$ and has a winning strategy whichever ruleset the opponent enforces. 

Next, consider the case that $\x$ is a $\mathcal{P}$-position under $(A \circledcirc B)\odot C$. Then the opponent can win by enforcing $(A \circledcirc B)$ or $C$. Assume that the opponent can win by enforcing $(A \circledcirc B)$. Then, playing in $\x$ under $(A \odot C) \circledcirc (B \odot C)$, whichever the current player chooses $(A \odot C)$ or $(B \odot C)$, the opponent can win by enforcing $A$ or $B$. If the opponent can win by enforcing $C$, then in $\x$ under $(A \odot C) \circledcirc (B \odot C)$ the opponent can also win because whichever the current player chooses, the opponent can  enforce 
$C$ at last.

Secondly, if $\x$ is an $\mathcal{N}$-position under $(A \odot B) \circledcirc C$, the current player has a winning move in both of $A$ and $B$ or in $C$.
Assume that the player has winning strategies in both $A$ and $B$. Then, in $\x$ under $(A \circledcirc C) \odot (B \circledcirc C), $ whichever the opponent enforces,  
the current player can choose a winning move in $A$ or in $B$.  If the current player has a winning move in $C$, then in $\x$ under $(A \circledcirc C) \odot (B \circledcirc C), $ whichever the opponent enforces, 
the current player can choose a winning move in $C$. 

If $\x$ is a $\mathcal{P}$-position under $(A \odot B) \circledcirc C,$ then the opponent can win in $(A \odot B)$ and in $C$. Without loss of generality, we may assume that the opponent can win by enforcing $A$. Then, in $\x$ under $(A \circledcirc C) \odot (B \circledcirc C)$, the opponent can win by enforcing $(A \circledcirc C)$. Therefore, the pair of operators $(\odot, \circledcirc)$ 
satisfies the distributive law.
 \end{proof}

\section{Nimbers under enforced rulesets}
\label{sec:nimvalueforcomply}

Let $\x=(x_1, x_2, \ldots, x_k)$ be a {\sc Nim} position with heap sizes $x_1, x_2, \ldots, x_k$. The 
nim-sum of $\x$ is given by $x_1 \oplus x_2 \oplus \cdots \oplus x_k$, where the $\oplus$ operator is:  write the numbers $x_1, x_2, \ldots, x_k$ in binary and then add them without carrying. If the nim-sum is zero, we call $\x$ a zero position. 
\\

\begin{theorem}[Bouton \cite{B02}]
\label{thm:bouton}
Let $\x$ be a {\sc Nim} position.
\begin{enumerate}
    \item If $\x$ is a zero position, then every move from $\x$ leads to a nonzero position.
    \item If $\x$ is a nonzero position, then there exists a move from $\x$ to a zero position.
    \\
\end{enumerate}
\end{theorem}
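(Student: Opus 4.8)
The plan is to treat the two parts separately, both resting on a single algebraic identity describing how the nim-sum changes under one move. Write $s = x_1 \oplus \cdots \oplus x_k$ for the nim-sum of $\x$. The fundamental observation, which I would establish first, is that any legal move replaces exactly one heap $x_i$ by some $x_i' < x_i$, and under this replacement the new nim-sum becomes $s \oplus x_i \oplus x_i'$. This follows because XOR is commutative, associative, and self-inverse, so removing the old contribution $x_i$ and inserting the new contribution $x_i'$ amounts to XORing both into $s$. Both statements of the theorem drop out of this identity.

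For part (1), assume $s = 0$. After any move the new nim-sum is $0 \oplus x_i \oplus x_i' = x_i \oplus x_i'$. Since a move strictly decreases a heap we have $x_i' \neq x_i$, hence $x_i \oplus x_i' \neq 0$, so the resulting position is nonzero; this direction is immediate once the identity is in hand. For part (2), assume $s \neq 0$ and let $d$ be the index of the most significant $1$-bit of $s$. Because that bit of $s$ equals $1$, an odd number of heaps carry a $1$ in position $d$; in particular some heap $x_i$ does. I would then propose the move $x_i \mapsto x_i' := x_i \oplus s$ and check two things. That it reaches a zero position is a one-line computation: the new nim-sum is $s \oplus x_i \oplus (x_i \oplus s) = 0$.

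The step I expect to be the main obstacle is the \emph{legality} check $x_i' < x_i$, i.e.\ verifying that the proposed move genuinely decreases heap $i$. The argument is a bit-by-bit comparison at the leading bit $d$: all bits of $s$ above position $d$ vanish by choice of $d$, so $x_i$ and $x_i'$ agree in every position above $d$; at position $d$ the bit of $x_i$ is $1$ while the bit of $x_i'$ is $0$, since XORing with the $1$-bit of $s$ flips it. Therefore the most significant position at which $x_i$ and $x_i'$ differ is $d$, where $x_i$ exceeds $x_i'$, forcing $x_i' < x_i$ irrespective of the lower-order bits. I would present this bit-comparison as the crux of the proof and keep the surrounding nim-sum computations routine.
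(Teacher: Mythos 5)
Your proof is correct and complete: the identity that a move replacing $x_i$ by $x_i'$ turns the nim-sum $s$ into $s \oplus x_i \oplus x_i'$, the immediate conclusion $x_i \oplus x_i' \neq 0$ for part (1), and for part (2) the choice $x_i' = x_i \oplus s$ at a heap carrying the leading bit $d$ of $s$, with the bit-by-bit check that $x_i' < x_i$, together form the classical argument. The paper states this theorem without proof, citing Bouton's 1902 paper, so there is no in-paper proof to compare against; your argument is precisely the standard one from that source, including the correct identification of the legality check $x_i' < x_i$ as the only non-routine step.
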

Therefore, if $\x$ is a zero position, the  previous 
player can guarantee a win i.e., $\x$ is a $\mathcal{P}$-position. If $\x$ is a nonzero position, then the current 
player can guarantee a win i.e., $\x$ is an $\mathcal{N}$-position.

Sprague and Grundy extended Bouton’s theorem for general impartial games in normal play. The key is the minimal excludant (mex) function.\\
\begin{definition}[Minimal Excludant]\
\label{def:mex}
Let $S$ denote a finite set of non-negative integers. Then the minimal excludant
$\mathrm{mex}(S)$ is the least non-negative integer not in $S$. \\
\end{definition}

The Sprague-Grundy theory recursively assigns a non-negative integer $\mathcal{G}(\x)$, known as the nim-value of $\x$, to each short normal play impartial game $\x$.\\

\begin{definition}[Nim-value]
\label{def:nimvalue}
For any game position $\x$, the nim-value function $\mathcal{G}(\x)$ is
$$
\mathcal{G}(\x)= \mathrm{mex}(\{\mathcal{G}(\x') \mid \x'\in f(\x)\}).
$$
\end{definition}

We sometimes abbreviate ``nim-value'' by {\em nimber}.\\

\begin{theorem}[Sprague-Grundy \cite{SP35,GR39}]
\label{thm:Sprague-Grundy}
     For any impartial normal-play game position $\x$, $\x$ is a $\mathcal{P}$-position if and only if $\mathcal{G}(\x) = 0$. In general, $\x$ is equivalent to a nim heap of size $\mathcal{G}(\x)$, i.e. the nimber $\mathcal{G}(\x)$.\\
\end{theorem}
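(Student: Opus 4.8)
The plan is to proceed by induction on the maximal game length guaranteed by shortness, and to treat the two assertions separately: first the equivalence $\mathcal{O}(\x)=\mathcal P \iff \mathcal{G}(\x)=0$, and then the stronger claim that $\x$ behaves like a single nim heap in any disjunctive sum. The engine in both cases is the minimal-excludant property, which lets one simultaneously \emph{realize} every value below $\mathcal{G}(\x)$ among the options and \emph{avoid} the value $\mathcal{G}(\x)$ itself.

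For the first assertion I would argue directly from Definition~\ref{def:nimvalue}. If $\mathcal{G}(\x)=0$, then no option $\x'\in f(\x)$ has $\mathcal{G}(\x')=0$, so by the induction hypothesis every option is an $\mathcal N$-position and hence $\x$ is a $\mathcal P$-position. Conversely, if $\mathcal{G}(\x)\neq 0$, then $0$ lies in the set whose mex is taken, so some $\x'\in f(\x)$ has $\mathcal{G}(\x')=0$; by induction $\x'$ is a $\mathcal P$-position, whence $\x$ is an $\mathcal N$-position. This parallels Theorem~\ref{thm:bouton}, with mex in the role of the nim-sum.

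For the second assertion I would use that, in short normal-play impartial theory, $\x$ is equivalent to a nim heap $H$ of size $n:=\mathcal{G}(\x)$ precisely when the disjunctive sum $\x+H$ is a $\mathcal P$-position. I would establish this by giving the previous player a strategy that maintains the invariant that each reply leaves a ``matched'' sum $\y+H'$ with $\mathcal{G}(\y)=|H'|$. If the mover shrinks the heap to size $m<|H'|$, the mex property supplies an option $\y'$ with $\mathcal{G}(\y')=m$, restoring the match; if the mover plays inside the game to $\y'$ with $\mathcal{G}(\y')=m$ (and $m\neq|H'|$, since equality is forbidden by mex), the replier either shrinks the heap to $m$ when $m<|H'|$, or, when $m>|H'|$, uses the mex property of $\mathcal{G}(\y')$ to descend to a further option of nim-value exactly $|H'|$. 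Each matched position has strictly smaller total game length, so the induction drives play to the terminal matched sum and $\x+H$ is a $\mathcal P$-position.

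The main obstacle is the bookkeeping of this matching strategy across the disjunctive sum, and in particular the double invocation of mex noted above. A cleaner fallback, should the direct argument become unwieldy, is to first prove the additive law $\mathcal{G}(\x+\y)=\mathcal{G}(\x)\oplus\mathcal{G}(\y)$ by induction—again via mex, comparing options of $\x+\y$ against the binary digits of $\mathcal{G}(\x)\oplus\mathcal{G}(\y)$—and then combine it with the first assertion: since $\mathcal{G}(\x+H)=\mathcal{G}(\x)\oplus n=0$, the sum $\x+H$ is a $\mathcal P$-position, yielding the equivalence. Either route reduces to the same core fact that mex lets one both hit and miss prescribed nim-values among the available options.
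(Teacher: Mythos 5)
The paper offers no internal proof of this statement: it is quoted as the classical Sprague--Grundy theorem with citations to \cite{SP35,GR39}, so there is nothing in the paper to compare against, and your proposal should be judged on its own. It is correct, and it is the standard textbook argument. The first part (induction on the length bound $T(\x)$ from shortness, using that mex both avoids $\mathcal{G}(\x)$ among the option values and realizes every value below it) is exactly the usual outcome characterization, mirroring Theorem~\ref{thm:bouton}. The matching strategy for $\x + *n$ is also sound: a heap move to $m<|H'|$ is answered inside the game by an option of value $m$ (mex realization), a game move to value $m<|H'|$ is answered on the heap, and a game move to value $m>|H'|$ is answered by descending further inside the game to value $|H'|$ (mex realization applied to $\y'$); equality $m=|H'|$ is excluded by the mex avoidance property, and each exchange strictly decreases the total length bound, so the induction closes. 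The one step you invoke without proof is that ``$\x+*n$ is a $\mathcal{P}$-position'' entails equivalence to the heap in \emph{all} disjunctive sums; this requires the background lemma that adding a $\mathcal{P}$-position never changes an outcome (or, alternatively, your fallback route through $\mathcal{G}(\x+\y)=\mathcal{G}(\x)\oplus\mathcal{G}(\y)$, which the paper states separately as Theorem~\ref{thm:Sprague-Grundygeneral}). Since the paper itself treats this as classical background, that omission is acceptable, but you should flag it if a self-contained proof is wanted.
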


\begin{definition}[Disjunctive Sum]
\label{def:sumgame}
For any game positions $\x$ and $\y$, the disjunctive sum $\x + \y$ is the game whose options are $\x+f(\y)=\{\x+\y'\mid \y'\in f(\y)\}$ and $f(\x)+\y=\{\x'+\y\mid \x'\in f(\x)\}$.
\\
\end{definition}
\begin{theorem}[Sprague-Grundy \cite{SP35,GR39}]
\label{thm:Sprague-Grundygeneral}
    For any game positions $\x$ and $\y$,
$$
\mathcal{G}(\x+\y)= \mathcal{G}(\x) \oplus  \mathcal{G}(\y).
    $$

\end{theorem}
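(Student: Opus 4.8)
The plan is to prove the identity directly from the recursive mex-definition of the nim-value (Definition~\ref{def:nimvalue}), by strong induction on the game. Write $a = \mathcal{G}(\x)$ and $b = \mathcal{G}(\y)$; the goal is to show that the mex of the set of nim-values of the options of $\x + \y$ equals $a \oplus b$. Since the rulesets are short, every play sequence from $\x + \y$ terminates, so I may take as inductive hypothesis that the sum formula already holds for every option, i.e. $\mathcal{G}(\x' + \y) = \mathcal{G}(\x') \oplus b$ for each $\x' \in f(\x)$ and $\mathcal{G}(\x + \y') = a \oplus \mathcal{G}(\y')$ for each $\y' \in f(\y)$. By Definition~\ref{def:sumgame} these two families are exactly the options of $\x + \y$, so the induction reduces the theorem to two claims about the set $S = \{\mathcal{G}(z) : z \in f(\x+\y)\}$: first, $a \oplus b \notin S$, and second, every non-negative integer $c < a \oplus b$ lies in $S$. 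Together these say $\mathrm{mex}(S) = a \oplus b$.

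For the first claim I would argue by contradiction. A one-line consequence of the mex-definition is that no option of a position can carry the same nim-value as the position itself; in particular $\mathcal{G}(\x') \neq a$ for every $\x' \in f(\x)$ and $\mathcal{G}(\y') \neq b$ for every $\y' \in f(\y)$. If some $\x$-move gave $\mathcal{G}(\x' + \y) = a \oplus b$, the inductive hypothesis would force $\mathcal{G}(\x') \oplus b = a \oplus b$, hence $\mathcal{G}(\x') = a$, a contradiction; the symmetric argument rules out the $\y$-moves.

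The second claim is the combinatorial heart of the proof and the step I expect to be the main obstacle, since it is where the nim-sum structure is genuinely used. Given $c < a \oplus b$, set $d = a \oplus b \oplus c$, which is nonzero. I would locate the most significant bit position in which $d$ has a $1$; because $c < a \oplus b$ this bit is $0$ in $c$ and $1$ in $a \oplus b$, so it is a position where the binary expansions of $a$ and $b$ differ. Without loss of generality that bit is set in $a$. The key numerical observation is then that $a \oplus d < a$: XORing $a$ by $d$ clears this top bit of $d$, which is set in $a$, while leaving all higher bits of $a$ untouched, strictly decreasing the value. Since $\mathcal{G}(\x) = a$, the mex-definition guarantees that every value below $a$ is realized by some option, so there is an $\x' \in f(\x)$ with $\mathcal{G}(\x') = a \oplus d$. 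By the inductive hypothesis the corresponding option of $\x + \y$ then has nim-value $(a \oplus d) \oplus b = a \oplus b \oplus d = c$, placing $c \in S$. Combining the two claims gives $\mathrm{mex}(S) = a \oplus b = \mathcal{G}(\x) \oplus \mathcal{G}(\y)$, which is exactly $\mathcal{G}(\x + \y)$ and closes the induction.
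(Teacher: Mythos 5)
Your proof is correct, and it is the standard textbook argument for the Sprague--Grundy sum theorem. Note, however, that the paper itself offers no proof of this statement: Theorem~\ref{thm:Sprague-Grundygeneral} is quoted as a classical result with a citation to Sprague and Grundy, so there is no internal proof to compare against. What you have supplied is precisely the canonical argument from the literature: induction on the (well-founded, by shortness) structure of $\x+\y$, with the two claims $\mathcal{G}(\x)\oplus\mathcal{G}(\y)\notin S$ and $\{0,1,\ldots,(\mathcal{G}(\x)\oplus\mathcal{G}(\y))-1\}\subseteq S$ for the set $S$ of option values. Both claims are handled soundly: the exclusion step correctly uses the fact that no position has an option of equal nim-value (immediate from the mex-definition), and the realization step correctly identifies the leading bit of $d=\mathcal{G}(\x)\oplus\mathcal{G}(\y)\oplus c$ as a bit where $c$ is $0$ and $\mathcal{G}(\x)\oplus\mathcal{G}(\y)$ is $1$ (this follows from $c<\mathcal{G}(\x)\oplus\mathcal{G}(\y)$, since two integers compare by their highest differing bit), hence a bit where exactly one of the two summand values is set; the observation $\mathcal{G}(\x)\oplus d<\mathcal{G}(\x)$ then legitimately invokes the mex-definition to find the required option. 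The base case ($\x+\y$ terminal forces both components terminal, so both sides vanish) is covered implicitly by your two claims with $S=\emptyset$. The only cosmetic gap is that you do not name an explicit induction measure, but for jointly short components any standard measure, such as $T(\x)+T(\y)$ with $T$ as in Definition~\ref{def:shortgame}, makes the induction rigorous, since every option of $\x+\y$ strictly decreases it.
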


We can also consider the disjunctive sum of positions under rulesets combined by the enforce operator. We define recursively the enforce nim-value  $\mathcal{G^E}(\x)$ of such positions as follows:\\

\begin{definition}[Enforce Operator Nimbers]
\label{Def:enforce_nimval}

Let $A$ and $B$ be jointly short rulesets on $\X$, and consider the game $A\odot B$.  
  Then, the enforce nimber of $\x \in \X$ is $\mathcal{G^E}(\x)=\mathcal{G^E}_{A\odot B}(\x)$ where $$\mathcal{G^E}(\x) ={\rm mex}(\{\mathcal{G^E}(\x')\, |\, \x'\in f_A(\x)\}\cap \{\mathcal{G^E}(\x')\, |\, \x'\in f_B(\x)\}).\footnote{This definition is similar to the way of \cite{HR1}, but in that manuscript they do not consider the concept of operator for rulesets.}$$
\end{definition}

Let us justify the soundness of  Definition~\ref{Def:enforce_nimval} before we state the main result of this section. As $A$ and $B$ are  jointly short, for any $\x\in \X$, there exists some number $T_{A \circledcirc B}(\x)$ such that every play sequence of $A\circledcirc B$ starting from $\x$ terminates at most $T_{A \circledcirc B}(\x)$ moves.\footnote{Here, we will use induction on the number of the length of play.
 Then, we use $T_{A \circledcirc B}(\x)$ (this $T$ is defined in Definition \ref{def:shortgame}) for the upper bound of the length of play on $A \odot B$, because we cannot define $T_{A \odot B}$.}  To find  $\mathcal{G^E}(\x)=\mathcal{G^E}_{\!A\odot B}(\x)$, we proceed by induction on $T_{A \circledcirc B}(\x)$.  If $T_{A \circledcirc B}(\x)=0$, then $\mathcal{G^E}(\x)=0$.  Assume that for any ${\y}$ with $T_{A \circledcirc B}({\y})<T_{A \circledcirc B}(\x)$, we have already determined $\mathcal{G^E}({\y})$.  Then for each $\x'\in f_A(\x)\cup f_B(\x)$, we have $T_{A \circledcirc B}(\x')<T_{A \circledcirc B}(\x)$, and we may assume that $\mathcal{G}(\x')$ is already determined.\\

\begin{theorem} Let $A$ and $B$ be   jointly short rulesets on $\X$. 
\label{thm:nimvalue}
\begin{enumerate}

\item[(1)] If $f_A(\x)=\emptyset$ or $f_B(\x)=\emptyset$, then $\mathcal{G^E}(\x)=0$.
\item[(2)] If $\mathcal{G^E}(\x)=n$, then the previous player can enforce a ruleset 
such that, for any option $\x'$, $\mathcal{G^E}(\x')\ne n$. 

\item[(3)] Consider integers $0\le m<n$. If $\mathcal{G^E}(\x)= n$, then,  whichever ruleset $A$ or $B$ the previous player enforces, there is an option $\x'$ such that $\mathcal{G^E}(\x') = m$.
\item[(4)] $\x\in \X$ is a $\mathcal{P}$-position of $A\odot B$ if and only if $\mathcal{G^E}(\x)=0$.
\item[(5)] 
Let $R_1, \ldots, R_k$ be impartial rulesets, some of them being impartial short rulesets, say $R_1, \ldots, R_j$ and some of them being jointly short enforce rulesets, say $R_{j+1},\ldots, R_k$.  The corresponding game boards are $\X_1,\ldots, \X_k$. 
The  disjunctive sum $\x_1+\cdots+\x_k$ is a $\mathcal{P}$-position if and only if the nim-sum $$\mathcal{G}(\x_1)\oplus \cdots \oplus \mathcal{G}(\x_j)\oplus \mathcal{G^E}(\x_{j+1})\oplus \cdots \oplus \mathcal{G^E}(\x_k)=0.$$  In other words, the enforced nimbers can be used in exactly the same way as the classical nimbers for determining the outcomes of disjunctive sums. In particular, when $A\odot B$ is an enforce operator of two jointly short rulesets $A$ and $B$ on a set $\X$, and $*n$ is a {\sc Nim} heap of size $n$, 
then $\x + *n$ is a $\mathcal{P}$-position if and only if $\mathcal {G^E}(\x)=n$.
\end{enumerate}
\end{theorem}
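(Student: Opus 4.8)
The plan is to dispatch parts~(1)--(3) directly from the intersection form of the $\mathrm{mex}$ in Definition~\ref{Def:enforce_nimval}, to derive part~(4) from them by induction, and to reserve the real work for part~(5). Throughout I would induct on $T_{A\circledcirc B}(\x)$, which bounds the length of any play of $A\odot B$ from $\x$, exactly as in the soundness discussion following Definition~\ref{Def:enforce_nimval}. For part~(1), if $f_A(\x)=\emptyset$ or $f_B(\x)=\emptyset$, then one of the two option-value sets is empty, hence so is their intersection, and $\mathrm{mex}(\emptyset)=0$. For parts~(2) and~(3), write $S_A=\{\mathcal{G^E}(\x')\mid \x'\in f_A(\x)\}$ and $S_B=\{\mathcal{G^E}(\x')\mid\x'\in f_B(\x)\}$, so that $\mathcal{G^E}(\x)=\mathrm{mex}(S_A\cap S_B)$. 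If $\mathcal{G^E}(\x)=n$, then $n\notin S_A\cap S_B$, so $n$ is absent from at least one of $S_A$, $S_B$; the previous player enforces that ruleset and no option carries nimber $n$, giving part~(2). Conversely, for every $m<n$ we have $m\in S_A\cap S_B$, hence $m$ lies in both $S_A$ and $S_B$, so whichever ruleset is enforced there is an option of nimber $m$, giving part~(3).

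Part~(4) then follows by induction. If $\mathcal{G^E}(\x)=0$, part~(2) lets the previous player enforce a ruleset all of whose options $\x'$ satisfy $\mathcal{G^E}(\x')\neq0$; by induction each such $\x'$ is an $\mathcal{N}$-position, so the mover is trapped and $\x$ is a $\mathcal{P}$-position (the degenerate case of an empty enforced option set is subsumed, since then the mover cannot move at all). If $\mathcal{G^E}(\x)=n>0$, part~(3) with $m=0$ shows that, no matter which ruleset the opponent enforces, the mover can reach an option of nimber $0$, which is a $\mathcal{P}$-position by induction; hence $\x$ is an $\mathcal{N}$-position.

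The substance is part~(5), where I would run the classical Sprague--Grundy argument for disjunctive sums, using parts~(2) and~(3) to absorb the enforce components. Set $G=\mathcal{G}(\x_1)\oplus\cdots\oplus\mathcal{G}(\x_j)\oplus\mathcal{G^E}(\x_{j+1})\oplus\cdots\oplus\mathcal{G^E}(\x_k)$ and induct on the total play length. If $G\neq0$, the standard nim-sum computation selects a component $i$ whose nimber must be lowered to a specific target $g_i'$ strictly below its current value; for an ordinary component the mover simply plays there, while for an enforce component the mover selects it and, by part~(3), reaches $g_i'$ whatever the opponent enforces, so the mover attains $G=0$ and the sum is an $\mathcal{N}$-position. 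If $G=0$, I claim the mover cannot preserve $G=0$: choosing an ordinary component forces a nimber change, while choosing an enforce component $i$ lets the opponent, by part~(2), enforce a ruleset under which every option changes the nimber of component $i$. Either way the resulting position has $G\neq0$, so the opponent returns it to $G=0$ by the previous case, and the sum is a $\mathcal{P}$-position. The final assertion about $\x+*n$ is the instance with a single enforce component and one {\sc Nim} heap, since $\mathcal{G^E}(\x)\oplus n=0$ iff $\mathcal{G^E}(\x)=n$.

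The main obstacle is the bookkeeping in part~(5) around the order of decisions within a single turn on an enforce component: the mover first commits to the component and only then does the opponent enforce a ruleset. Parts~(2) and~(3) are engineered to survive exactly this order---part~(3) guarantees the mover can reach any strictly smaller target despite the opponent's subsequent enforcement (needed when $G\neq0$), and part~(2) guarantees the opponent can always force the component's nimber to change (needed when $G=0$). Verifying that these two facts combine to reproduce the usual nim-sum strategy, with the induction measure strictly decreasing at each turn, is the crux; the remainder is the ordinary Sprague--Grundy bookkeeping.
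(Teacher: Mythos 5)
Your proposal is correct and takes essentially the same approach as the paper: parts (1)--(3) are read off directly from the intersection form of the $\mathrm{mex}$ in Definition~\ref{Def:enforce_nimval}, part (4) follows from (2) and (3) by induction on $T_{A\circledcirc B}$, and part (5) is the classical Sprague--Grundy nim-sum argument with (2) and (3) substituting for the usual mex properties. The only difference is one of detail: the paper dispatches (4) and (5) in a sentence each, whereas you explicitly carry out the induction and the disjunctive-sum bookkeeping (including the commit-to-component-then-enforce order of decisions within a turn), which is precisely the argument the paper implicitly appeals to.
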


\begin{proof}

We demonstrate that  $\mathcal{G^E}(\x)$ as in Definition~\ref{Def:enforce_nimval} satisfies all the properties (1)$\sim$(5).

(1)  Suppose without loss of generality that  $f_A(\x)=\emptyset$; then $\{\mathcal{G^E}(\x')\, |\, \x'\in f_A(\x)\}=\emptyset$, and  the proof follows immediately from Definition~\ref{Def:enforce_nimval}. 

(2) Assume that $\mathcal{G^E}(\x)=n\in \mathbb{Z}_{\ge 0}$. Then, by Definition~\ref{Def:enforce_nimval}, either, for all $\x'\in f_A(\x)$, $\mathcal{G^E}(\x')\ne n$ , or, for all $\x'\in f_B(\x)$,  $\mathcal{G^E}(\x')\ne n$.  In the first case enforce the ruleset $A$ and otherwise $B$. 

(3) Again we assume that $\mathcal{G^E}(\x)=n\in \mathbb{Z}_{\ge 0}$. Take a non-negative integer $m~(<n)$. Then by Definition~\ref{Def:enforce_nimval}, $m\in \{\mathcal{G^E}(\x')\, |\, \x'\in f_A(\x)\}\cap \{\mathcal{G^E}(\x')\, |\, \x'\in f_B(\x)\}$, which implies $\mathcal{G^E}(\x')=m$ for some $\x'\in f_A(\x)$ and $\mathcal{G^E}(\x')=m$ for some $\x'\in f_B(\x)$.  Therefore, whichever ruleset the opponent enforces, the current player can find an option $\x'$ such that $\mathcal{G^E}(\x')=m$.

(4) Set $n=0$ in (2) and set $m=0$ in (3). Observe that in (2), the enforcing previous player has the usual advantage in a $\mathcal{P}$-position, while in (3) the non-enforcing current player has the standard $\mathcal N$-position advantage.  
 (5) Items (1)-(4) establish that the enforced nimbers $\mathcal{G^E}$ satisfy all the standard properties of classical nimbers. Consequently, we can add them using the nim-sum in the same manner as classical nimbers to determine the outcomes of disjunctive sums. 
\end{proof}
\section{Game practice}
\label{sec:examples}
We will now consider various examples and compute the nimbers under the enforce operator, solve the initial problem, and more.\\ 

\textit{}\begin{example}
\label{ex:nim_value}
 Consider the rulesets {\sc Yama Nim}, $\large\rook_{\bf Y}$ and {\sc Wythoff Nim}, $\large\queen$. Figure~\ref{fig:YNWN} displays the nimbers for an initial game board with these ruelsets and the enforce combination  $ \large\rook_{\bf Y}\odot \large\queen$, respectively.  We observe that the nimbers of $\large\rook_{\bf Y}$ coincide with those of $\large\rook_{\bf Y}\odot\large\queen$. Thus playing a disjunctive sum $X+Y$, where $X\in\large\rook_{\bf Y}\odot\large\queen  $ and $Y\in \large\queen$, is the same as if $X\in\large\rook_{\bf Y}$ and $Y\in \large\queen$.  
\end{example}
\definecolor{teal}{rgb}{0.0, 0.7, 0.0}
\definecolor{green}{rgb}{0.78, 0.0, 0.94}
\begin{figure}[htbp!]

\begin{center}
\begin{tikzpicture}[scale = 0.5,
         box/.style={rectangle,draw=black, thick, minimum size=.5 cm},
     ]

\foreach \x in {0,1,...,5}{
     \foreach \y in {0,1,...,5}
         \node[box] at (\x,\y){};
 }

\draw (0, 6.2) node {$0$};
\draw (5, 6.2) node {$5$};

 \draw (-1.2, 5.1) node {$0$};
 \draw (-1.2, 0.1) node {$5$};

\node[box,fill=blue] at (0,0){};
\node[box,fill=blue] at (0,1){};
\node[box,fill=blue] at (0,2){};
\node[box,fill=blue] at (0,3){};
\node[box,fill=red] at (0,4){};
\node[box,fill=red] at (0,5){};
\node[box,fill=red] at (1,5){};
\node[box,fill=blue] at (2,5){};
\node[box,fill=blue] at (3,5){};
\node[box,fill=blue] at (4,5){};
\node[box,fill=blue] at (5,5){};
\node[box,fill=red] at (1,4){};
\node[box,fill=red] at (2,4){};
\node[box,fill=yellow] at (3,4){};
\node[box,fill=yellow] at (4,4){};
\node[box,fill=yellow] at (5,4){};
\node[box,fill=red] at (1,3){};
\node[box,fill=red] at (2,3){};
\node[box,fill=red] at (3,3){};
\node[box,fill=teal] at (4,3){};
\node[box,fill=teal] at (5,3){};
\node[box,fill=yellow] at (1,2){};
\node[box,fill=red] at (2,2){};
\node[box,fill=red] at (3,2){};
\node[box,fill=red] at (4,2){};
\node[box,fill=green] at (5,2){};
\node[box,fill=yellow] at (1,1){};
\node[box,fill=teal] at (2,1){};
\node[box,fill=red] at (3,1){};
\node[box,fill=red] at (4,1){};
\node[box,fill=red] at (5,1){};
\node[box,fill=yellow] at (1,0){};
\node[box,fill=teal] at (2,0){};
\node[box,fill=green] at (3,0){};
\node[box,fill=red] at (4,0){};
\node[box,fill=red] at (5,0){};

\draw[white] (0,0) node {1};
\draw[white] (0,1) node {1};
\draw[white] (0,2) node {1};
\draw[white] (0,3) node {1};
\draw[white] (0,4) node {0};
\draw[white] (0,5) node {0};
\draw[white] (1,5) node {0};
\draw[white] (2,5) node {1};
\draw[white] (3,5) node {1};
\draw[white] (4,5) node {1};
\draw[white] (5,5) node {1};
\draw[white] (1,4) node {0};
\draw[white] (2,4) node {0};
\draw[gray] (3,4) node {2};
\draw[gray] (4,4) node {2};
\draw[gray] (5,4) node {2};
\draw[white] (1,3) node {0};
\draw[white] (2,3) node {0};
\draw[white] (3,3) node {0};
\draw[white] (4,3) node {3};
\draw[white] (5,3) node {3};
\draw[gray] (1,2) node {2};
\draw[white] (2,2) node {0};
\draw[white] (3,2) node {0};
\draw[white] (4,2) node {0};
\draw[white] (5,2) node {4};
\draw[gray] (1,1) node {2};
\draw[white] (2,1) node {3};
\draw[white] (3,1) node {0};
\draw[white] (4,1) node {0};
\draw[white] (5,1) node {0};
\draw[gray] (1,0) node {2};
\draw[white] (2,0) node {3};
\draw[white] (3,0) node {4};
\draw[white] (4,0) node {0};
\draw[white] (5,0) node {0};
\end{tikzpicture}\hspace{2 mm}
\begin{tikzpicture}[scale = 0.5,
         box/.style={rectangle,draw=black, thick, minimum size=.5 cm},
     ]
\foreach \x in {0,1,...,5}{
     \foreach \y in {0,1,...,5}
         \node[box] at (\x,\y){};
 }
\draw (0, 6.2) node {$0$};
\draw (5, 6.2) node {$5$};
\draw (-1.2, 5.1) node {$0$};
\draw (-1.2, 0.1) node {$5$};
\node[box,fill=brown] at (0,0){};
\node[box,fill=green] at (0,1){};
\node[box,fill=teal] at (0,2){};
\node[box,fill=yellow] at (0,3){};
\node[box,fill=blue] at (0,4){};
\node[box,fill=red] at (0,5){};
\node[box,fill=blue] at (1,5){};
\node[box,fill=yellow] at (2,5){};
\node[box,fill=teal] at (3,5){};
\node[box,fill=green] at (4,5){};
\node[box,fill=brown] at (5,5){};
\node[box,fill=yellow] at (1,4){};
\node[box,fill=red] at (2,4){};
\node[box,fill= green] at (3,4){};
\node[box,fill=brown] at (4,4){};
\node[box,fill=teal] at (5,4){};
\node[box,fill=red] at (1,3){};
\node[box,fill=blue] at (2,3){};
\node[box,fill= brown] at (3,3){};
\node[box,fill=teal] at (4,3){};
\node[box,fill= green] at (5,3){};
\node[box,fill=green] at (1,2){};
\node[box,fill=brown] at (2,2){};
\node[box,fill= purple] at (3,2){};
\node[box,fill=yellow] at (4,2){};
\node[box,fill= red] at (5,2){};
\node[box,fill=brown] at (1,1){};
\node[box,fill=teal] at (2,1){};
\node[box,fill=yellow] at (3,1){};
\node[box,fill=gray] at (4,1){};
\node[box,fill= purple] at (5,1){};
\node[box,fill=teal] at (1,0){};
\node[box,fill=green] at (2,0){};
\node[box,fill=red] at (3,0){};
\node[box,fill=purple] at (4,0){};
\node[box,fill= magenta] at (5,0){};
  \draw[white] (0,0) node {5};
  \draw[white] (0,1) node {4};
  \draw[white] (0,2) node {3};
  \draw[gray] (0,3) node {2};
  \draw[white] (0,4) node {1};
  \draw[white] (0,5) node {0};
 \draw[white] (1,5) node {1};
 \draw[gray] (2,5) node {2};
 \draw[white] (3,5) node {3};
 \draw[white] (4,5) node {4};
 \draw[white] (5,5) node {5};
\draw[gray] (1,4) node {2};
\draw[white] (2,4) node {0};
\draw[white] (3,4) node {4};
\draw[white] (4,4) node {5};
\draw[white] (5,4) node {3};
\draw[white] (1,3) node {0};
\draw[white] (2,3) node {1};
\draw[white] (3,3) node {5};
\draw[white] (4,3) node {3};
\draw[white] (5,3) node {4};
\draw[white] (1,2) node {4};
\draw[white] (2,2) node {5};
\draw[white] (3,2) node {6};
\draw[gray] (4,2) node {2};
\draw[white] (5,2) node {0};
\draw[white] (1,1) node {5};
\draw[white] (2,1) node {3};
\draw[gray] (3,1) node {2};
\draw[white] (4,1) node {7};
\draw[white] (5,1) node {6};
\draw[white] (1,0) node {3};
\draw[white] (2,0) node {4};
\draw[white] (3,0) node {0};
\draw[white] (4,0) node {6};
\draw[white] (5,0) node {8};
\end{tikzpicture}\hspace{2 mm}
\begin{tikzpicture}[scale = 0.5,
         box/.style={rectangle,draw=black, thick, minimum size=.5 cm},
     ]
\foreach \x in {0,1,...,5}{
     \foreach \y in {0,1,...,5}
         \node[box] at (\x,\y){};
 }
\draw (0, 6.2) node {$0$};
\draw (5, 6.2) node {$5$};
\draw (-1.2, 5.1) node {$0$};
\draw (-1.2, 0.1) node {$5$};

\node[box,fill=blue] at (0,0){};
\node[box,fill=blue] at (0,1){};
\node[box,fill=blue] at (0,2){};
\node[box,fill=blue] at (0,3){};
\node[box,fill=red] at (0,4){};
\node[box,fill=red] at (0,5){};
\node[box,fill=red] at (1,5){};
\node[box,fill=blue] at (2,5){};
\node[box,fill=blue] at (3,5){};
\node[box,fill=blue] at (4,5){};
\node[box,fill=blue] at (5,5){};
\node[box,fill=red] at (1,4){};
\node[box,fill=red] at (2,4){};
\node[box,fill=yellow] at (3,4){};
\node[box,fill=yellow] at (4,4){};
\node[box,fill=yellow] at (5,4){};
\node[box,fill=red] at (1,3){};
\node[box,fill=red] at (2,3){};
\node[box,fill=red] at (3,3){};
\node[box,fill=teal] at (4,3){};
\node[box,fill=teal] at (5,3){};
\node[box,fill=yellow] at (1,2){};
\node[box,fill=red] at (2,2){};
\node[box,fill=red] at (3,2){};
\node[box,fill=red] at (4,2){};
\node[box,fill=green] at (5,2){};
\node[box,fill=yellow] at (1,1){};
\node[box,fill=teal] at (2,1){};
\node[box,fill=red] at (3,1){};
\node[box,fill=red] at (4,1){};
\node[box,fill=red] at (5,1){};
\node[box,fill=yellow] at (1,0){};
\node[box,fill=teal] at (2,0){};
\node[box,fill=green] at (3,0){};
\node[box,fill=red] at (4,0){};
\node[box,fill=red] at (5,0){};

\draw[white] (0,0) node {1};
\draw[white] (0,1) node {1};
\draw[white] (0,2) node {1};
\draw[white] (0,3) node {1};
\draw[white] (0,4) node {0};
\draw[white] (0,5) node {0};
\draw[white] (1,5) node {0};
\draw[white] (2,5) node {1};
\draw[white] (3,5) node {1};
\draw[white] (4,5) node {1};
\draw[white] (5,5) node {1};
\draw[white] (1,4) node {0};
\draw[white] (2,4) node {0};
\draw[gray] (3,4) node {2};
\draw[gray] (4,4) node {2};
\draw[gray] (5,4) node {2};
\draw[white] (1,3) node {0};
\draw[white] (2,3) node {0};
\draw[white] (3,3) node {0};
\draw[white] (4,3) node {3};
\draw[white] (5,3) node {3};
\draw[gray] (1,2) node {2};
\draw[white] (2,2) node {0};
\draw[white] (3,2) node {0};
\draw[white] (4,2) node {0};
\draw[white] (5,2) node {4};
\draw[gray] (1,1) node {2};
\draw[white] (2,1) node {3};
\draw[white] (3,1) node {0};
\draw[white] (4,1) node {0};
\draw[white] (5,1) node {0};
\draw[gray] (1,0) node {2};
\draw[white] (2,0) node {3};
\draw[white] (3,0) node {4};
\draw[white] (4,0) node {0};
\draw[white] (5,0) node {0};
\end{tikzpicture}
\end{center}
\vspace{3mm}
\caption{The initial nimbers (0: red, 1: blue, 2: yellow, 3: green, 4: purple, 5: brown, 6: merlot, 7: gray, 8: magenta) of {\sc Yama Nim}, $\large\rook_{\bf Y}$ (left) {\sc Wythoff Nim} $\large \queen$ (middle) and 
$\large\rook_{\bf Y} \odot \large\queen$ (right). The nimbers in the rightmost picture are computed as follows: if the value is $v$, then, for all $v'<v$, each ruleset has an option of value $v'$, and at most one ruleset has an option of value $v$. 
}\label{fig:YNWN}
\end{figure}

\begin{example} 
\label{ex:odawara_nim_value}
Let $\X=\mathbb{Z}_{\geq 0} \times \mathbb{Z}_{\geq 0}$ and consider the  rulesets $C$ and $D$, defined by 
$f_C((x, y))=\left\{(x-i, y), (x,y-i) \, |\, i \in \{1, 2\},~ x,y \geq i\right\}$, and\\ 

 $f_D((x, y))=\left\{
  \begin{array}{ll}
    \{(x-i, y-j) \, |\, i, j \in \{1, 2\},~ x \geq i,~ y \geq j \}, & \text{if } x,y>0\\
    \{(x-i, y) \, |\, i \in \{1, 2\},~ x \geq i \},& \text{if } x>0,y=0\\
    \{(x, y-i) \, |\, i \in \{1, 2\},~ y \geq i \},& \text{if } x=0,y>0\\
    \emptyset, & \text{if } x,y=0.
  \end{array}
  \right.$\\\\
 Here ruleset $C$ is the usual disjunctive sum of two subtraction games, both with the set of removable numbers $S=\{1,2 \}$. 
And ruleset $D$ is the Continued Conjunctive Sum \cite{C76, S13} of two copies of $S$ \cite{OD24}. 
In Figures~\ref{fig:CandD} and \ref{fig:CenforceD}, we have computed the nimbers for the rulesets $C$, $D$, and the enforce combination $C \odot D$, respectively. Observe that the nimbers of the two individual games both differ from those under the enforce combination $C \odot D$. Moreover, the $\mathcal{P}$-positions are distinct. Therefore, rulesets $C$ and $D$ are confused.\\
\end{example}

\begin{figure}[htbp!]
    \centering
\begin{tikzpicture}[scale = 0.5,
         box/.style={rectangle,draw=black, thick, minimum size=.5 cm},
     ]
 \foreach \x in {0,1,...,10}{
     \foreach \y in {0,1,...,10}
         \node[box] at (\x,\y){};
 }
\draw (0, 11.2) node {$0$};
\draw (10, 11.2) node {$10$};
\draw (-1.2, 10.1) node {$0$};
\draw (-1.3, 0.1) node {$10$};
\node[box,fill=red] at (0,10){};
\node[box,fill=blue] at (1,10){};
\node[box,fill=yellow] at (2,10){};
\node[box,fill=red] at (3,10){};
\node[box,fill=blue] at (4,10){};
\node[box,fill=yellow] at (5,10){};
\node[box,fill=red] at (6,10){};
\node[box,fill=blue] at (7,10){};
\node[box,fill=yellow] at (8,10){};
\node[box,fill=red] at (9,10){};
\node[box,fill=blue] at (10,10){};
\node[box,fill=blue] at (0,9){};
\node[box,fill=red] at (1,9){};
\node[box,fill=teal] at (2,9){};
\node[box,fill=blue] at (3,9){};
\node[box,fill=red] at (4,9){};
\node[box,fill=teal] at (5,9){};
\node[box,fill=blue] at (6,9){};
\node[box,fill=red] at (7,9){};
\node[box,fill=teal] at (8,9){};
\node[box,fill=blue] at (9,9){};
\node[box,fill=red] at (10,9){};
\node[box,fill=yellow] at (0,8){};
\node[box,fill=teal] at (1,8){};
\node[box,fill=red] at (2,8){};
\node[box,fill=yellow] at (3,8){};
\node[box,fill=teal] at (4,8){};
\node[box,fill=red] at (5,8){};
\node[box,fill=yellow] at (6,8){};
\node[box,fill=teal] at (7,8){};
\node[box,fill=red] at (8,8){};
\node[box,fill=yellow] at (9,8){};
\node[box,fill=teal] at (10,8){};
\node[box,fill=red] at (0,7){};
\node[box,fill=blue] at (1,7){};
\node[box,fill=yellow] at (2,7){};
\node[box,fill=red] at (3,7){};
\node[box,fill=blue] at (4,7){};
\node[box,fill=yellow] at (5,7){};
\node[box,fill=red] at (6,7){};
\node[box,fill=blue] at (7,7){};
\node[box,fill=yellow] at (8,7){};
\node[box,fill=red] at (9,7){};
\node[box,fill=blue] at (10,7){};
\node[box,fill=blue] at (0,6){};
\node[box,fill=red] at (1,6){};
\node[box,fill=teal] at (2,6){};
\node[box,fill=blue] at (3,6){};
\node[box,fill=red] at (4,6){};
\node[box,fill=teal] at (5,6){};
\node[box,fill=blue] at (6,6){};
\node[box,fill=red] at (7,6){};
\node[box,fill=teal] at (8,6){};
\node[box,fill=blue] at (9,6){};
\node[box,fill=red] at (10,6){};
\node[box,fill=yellow] at (0,5){};
\node[box,fill=teal] at (1,5){};
\node[box,fill=red] at (2,5){};
\node[box,fill=yellow] at (3,5){};
\node[box,fill=teal] at (4,5){};
\node[box,fill=red] at (5,5){};
\node[box,fill=yellow] at (6,5){};
\node[box,fill=teal] at (7,5){};
\node[box,fill=red] at (8,5){};
\node[box,fill=yellow] at (9,5){};
\node[box,fill=teal] at (10,5){};
\node[box,fill=red] at (0,4){};
\node[box,fill=blue] at (1,4){};
\node[box,fill=yellow] at (2,4){};
\node[box,fill=red] at (3,4){};
\node[box,fill=blue] at (4,4){};
\node[box,fill=yellow] at (5,4){};
\node[box,fill=red] at (6,4){};
\node[box,fill=blue] at (7,4){};
\node[box,fill=yellow] at (8,4){};
\node[box,fill=red] at (9,4){};
\node[box,fill=blue] at (10,4){};
\node[box,fill=blue] at (0,3){};
\node[box,fill=red] at (1,3){};
\node[box,fill=teal] at (2,3){};
\node[box,fill=blue] at (3,3){};
\node[box,fill=red] at (4,3){};
\node[box,fill=teal] at (5,3){};
\node[box,fill=blue] at (6,3){};
\node[box,fill=red] at (7,3){};
\node[box,fill=teal] at (8,3){};
\node[box,fill=blue] at (9,3){};
\node[box,fill=red] at (10,3){};
\node[box,fill=yellow] at (0,2){};
\node[box,fill=teal] at (1,2){};
\node[box,fill=red] at (2,2){};
\node[box,fill=yellow] at (3,2){};
\node[box,fill=teal] at (4,2){};
\node[box,fill=red] at (5,2){};
\node[box,fill=yellow] at (6,2){};
\node[box,fill=teal] at (7,2){};
\node[box,fill=red] at (8,2){};
\node[box,fill=yellow] at (9,2){};
\node[box,fill=teal] at (10,2){};
\node[box,fill=red] at (0,1){};
\node[box,fill=blue] at (1,1){};
\node[box,fill=yellow] at (2,1){};
\node[box,fill=red] at (3,1){};
\node[box,fill=blue] at (4,1){};
\node[box,fill=yellow] at (5,1){};
\node[box,fill=red] at (6,1){};
\node[box,fill=blue] at (7,1){};
\node[box,fill=yellow] at (8,1){};
\node[box,fill=red] at (9,1){};
\node[box,fill=blue] at (10,1){};
\node[box,fill=blue] at (0,0){};
\node[box,fill=red] at (1,0){};
\node[box,fill=teal] at (2,0){};
\node[box,fill=blue] at (3,0){};
\node[box,fill=red] at (4,0){};
\node[box,fill=teal] at (5,0){};
\node[box,fill=blue] at (6,0){};
\node[box,fill=red] at (7,0){};
\node[box,fill=teal] at (8,0){};
\node[box,fill=blue] at (9,0){};
\node[box,fill=red] at (10,0){};
\draw[white] (0,10) node {0};
\draw[white] (1,10) node {1};
\draw[gray] (2,10) node {2};
\draw[white] (3,10) node {0};
\draw[white] (4,10) node {1};
\draw[gray] (5,10) node {2};
\draw[white] (6,10) node {0};
\draw[white] (7,10) node {1};
\draw[gray] (8,10) node {2};
\draw[white] (9,10) node {0};
\draw[white] (10,10) node {1};
\draw[white] (0,9) node {1};
\draw[white] (1,9) node {0};
\draw[white] (2,9) node {3};
\draw[white] (3,9) node {1};
\draw[white] (4,9) node {0};
\draw[white] (5,9) node {3};
\draw[white] (6,9) node {1};
\draw[white] (7,9) node {0};
\draw[white] (8,9) node {3};
\draw[white] (9,9) node {1};
\draw[white] (10,9) node {0};
\draw[gray] (0,8) node {2};
\draw[white] (1,8) node {3};
\draw[white] (2,8) node {0};
\draw[gray] (3,8) node {2};
\draw[white] (4,8) node {3};
\draw[white] (5,8) node {0};
\draw[gray] (6,8) node {2};
\draw[white] (7,8) node {3};
\draw[white] (8,8) node {0};
\draw[gray] (9,8) node {2};
\draw[white] (10,8) node {3};
\draw[white] (0,7) node {0};
\draw[white] (1,7) node {1};
\draw[gray] (2,7) node {2};
\draw[white] (3,7) node {0};
\draw[white] (4,7) node {1};
\draw[gray] (5,7) node {2};
\draw[white] (6,7) node {0};
\draw[white] (7,7) node {1};
\draw[gray] (8,7) node {2};
\draw[white] (9,7) node {0};
\draw[white] (10,7) node {1};
\draw[white] (0,6) node {1};
\draw[white] (1,6) node {0};
\draw[white] (2,6) node {3};
\draw[white] (3,6) node {1};
\draw[white] (4,6) node {0};
\draw[white] (5,6) node {3};
\draw[white] (6,6) node {1};
\draw[white] (7,6) node {0};
\draw[white] (8,6) node {3};
\draw[white] (9,6) node {1};
\draw[white] (10,6) node {0};
\draw[gray] (0,5) node {2};
\draw[white] (1,5) node {3};
\draw[white] (2,5) node {0};
\draw[gray] (3,5) node {2};
\draw[white] (4,5) node {3};
\draw[white] (5,5) node {0};
\draw[gray] (6,5) node {2};
\draw[white] (7,5) node {3};
\draw[white] (8,5) node {0};
\draw[gray] (9,5) node {2};
\draw[white] (10,5) node {3};
\draw[white] (0,4) node {0};
\draw[white] (1,4) node {1};
\draw[gray] (2,4) node {2};
\draw[white] (3,4) node {0};
\draw[white] (4,4) node {1};
\draw[gray] (5,4) node {2};
\draw[white] (6,4) node {0};
\draw[white] (7,4) node {1};
\draw[gray] (8,4) node {2};
\draw[white] (9,4) node {0};
\draw[white] (10,4) node {1};
\draw[white] (0,3) node {1};
\draw[white] (1,3) node {0};
\draw[white] (2,3) node {3};
\draw[white] (3,3) node {1};
\draw[white] (4,3) node {0};
\draw[white] (5,3) node {3};
\draw[white] (6,3) node {1};
\draw[white] (7,3) node {0};
\draw[white] (8,3) node {3};
\draw[white] (9,3) node {1};
\draw[white] (10,3) node {0};
\draw[gray] (0,2) node {2};
\draw[white] (1,2) node {3};
\draw[white] (2,2) node {0};
\draw[gray] (3,2) node {2};
\draw[white] (4,2) node {3};
\draw[white] (5,2) node {0};
\draw[gray] (6,2) node {2};
\draw[white] (7,2) node {3};
\draw[white] (8,2) node {0};
\draw[gray] (9,2) node {2};
\draw[white] (10,2) node {3};
\draw[white] (0,1) node {0};
\draw[white] (1,1) node {1};
\draw[gray] (2,1) node {2};
\draw[white] (3,1) node {0};
\draw[white] (4,1) node {1};
\draw[gray] (5,1) node {2};
\draw[white] (6,1) node {0};
\draw[white] (7,1) node {1};
\draw[gray] (8,1) node {2};
\draw[white] (9,1) node {0};
\draw[white] (10,1) node {1};
\draw[white] (0,0) node {1};
\draw[white] (1,0) node {0};
\draw[white] (2,0) node {3};
\draw[white] (3,0) node {1};
\draw[white] (4,0) node {0};
\draw[white] (5,0) node {3};
\draw[white] (6,0) node {1};
\draw[white] (7,0) node {0};
\draw[white] (8,0) node {3};
\draw[white] (9,0) node {1};
\draw[white] (10,0) node {0};
\end{tikzpicture}\hspace{2 mm}
\begin{tikzpicture}[scale = 0.5,
         box/.style={rectangle,draw=black, thick, minimum size=.5 cm},
     ]
 \foreach \x in {0,1,...,10}{
     \foreach \y in {0,1,...,10}
         \node[box] at (\x,\y){};
 }
\draw[thick, color=gray!60!yellow] (-0.55,-0.55) -- (-0.55, 10.55);
\draw[thick, color=gray!60!yellow] (-0.55, 10.55)-- (10.55, 10.55);
\draw (0, 11.2) node {$0$};
\draw (10, 11.2) node {$10$};
\draw (-1.2, 10.1) node {$0$};
\draw (-1.3, 0.1) node {$10$};
\node[box,fill=red] at (0,10){};
\node[box,fill=blue] at (1,10){};
\node[box,fill=yellow] at (2,10){};
\node[box,fill=red] at (3,10){};
\node[box,fill=blue] at (4,10){};
\node[box,fill=yellow] at (5,10){};
\node[box,fill=red] at (6,10){};
\node[box,fill=blue] at (7,10){};
\node[box,fill=yellow] at (8,10){};
\node[box,fill=red] at (9,10){};
\node[box,fill=blue] at (10,10){};
\node[box,fill=blue] at (0,9){};
\node[box,fill=blue] at (1,9){};
\node[box,fill=yellow] at (2,9){};
\node[box,fill=red] at (3,9){};
\node[box,fill=blue] at (4,9){};
\node[box,fill=yellow] at (5,9){};
\node[box,fill=red] at (6,9){};
\node[box,fill=blue] at (7,9){};
\node[box,fill=yellow] at (8,9){};
\node[box,fill=red] at (9,9){};
\node[box,fill=blue] at (10,9){};
\node[box,fill=yellow] at (0,8){};
\node[box,fill=yellow] at (1,8){};
\node[box,fill=yellow] at (2,8){};
\node[box,fill=red] at (3,8){};
\node[box,fill=blue] at (4,8){};
\node[box,fill=yellow] at (5,8){};
\node[box,fill=red] at (6,8){};
\node[box,fill=blue] at (7,8){};
\node[box,fill=yellow] at (8,8){};
\node[box,fill=red] at (9,8){};
\node[box,fill=blue] at (10,8){};
\node[box,fill=red] at (0,7){};
\node[box,fill=red] at (1,7){};
\node[box,fill=red] at (2,7){};
\node[box,fill=red] at (3,7){};
\node[box,fill=blue] at (4,7){};
\node[box,fill=yellow] at (5,7){};
\node[box,fill=red] at (6,7){};
\node[box,fill=blue] at (7,7){};
\node[box,fill=yellow] at (8,7){};
\node[box,fill=red] at (9,7){};
\node[box,fill=blue] at (10,7){};
\node[box,fill=blue] at (0,6){};
\node[box,fill=blue] at (1,6){};
\node[box,fill=blue] at (2,6){};
\node[box,fill=blue] at (3,6){};
\node[box,fill=blue] at (4,6){};
\node[box,fill=yellow] at (5,6){};
\node[box,fill=red] at (6,6){};
\node[box,fill=blue] at (7,6){};
\node[box,fill=yellow] at (8,6){};
\node[box,fill=red] at (9,6){};
\node[box,fill=blue] at (10,6){};
\node[box,fill=yellow] at (0,5){};
\node[box,fill=yellow] at (1,5){};
\node[box,fill=yellow] at (2,5){};
\node[box,fill=yellow] at (3,5){};
\node[box,fill=yellow] at (4,5){};
\node[box,fill=yellow] at (5,5){};
\node[box,fill=red] at (6,5){};
\node[box,fill=blue] at (7,5){};
\node[box,fill=yellow] at (8,5){};
\node[box,fill=red] at (9,5){};
\node[box,fill=blue] at (10,5){};
\node[box,fill=red] at (0,4){};
\node[box,fill=red] at (1,4){};
\node[box,fill=red] at (2,4){};
\node[box,fill=red] at (3,4){};
\node[box,fill=red] at (4,4){};
\node[box,fill=red] at (5,4){};
\node[box,fill=red] at (6,4){};
\node[box,fill=blue] at (7,4){};
\node[box,fill=yellow] at (8,4){};
\node[box,fill=red] at (9,4){};
\node[box,fill=blue] at (10,4){};
\node[box,fill=blue] at (0,3){};
\node[box,fill=blue] at (1,3){};
\node[box,fill=blue] at (2,3){};
\node[box,fill=blue] at (3,3){};
\node[box,fill=blue] at (4,3){};
\node[box,fill=blue] at (5,3){};
\node[box,fill=blue] at (6,3){};
\node[box,fill=blue] at (7,3){};
\node[box,fill=yellow] at (8,3){};
\node[box,fill=red] at (9,3){};
\node[box,fill=blue] at (10,3){};
\node[box,fill=yellow] at (0,2){};
\node[box,fill=yellow] at (1,2){};
\node[box,fill=yellow] at (2,2){};
\node[box,fill=yellow] at (3,2){};
\node[box,fill=yellow] at (4,2){};
\node[box,fill=yellow] at (5,2){};
\node[box,fill=yellow] at (6,2){};
\node[box,fill=yellow] at (7,2){};
\node[box,fill=yellow] at (8,2){};
\node[box,fill=red] at (9,2){};
\node[box,fill=blue] at (10,2){};
\node[box,fill=red] at (0,1){};
\node[box,fill=red] at (1,1){};
\node[box,fill=red] at (2,1){};
\node[box,fill=red] at (3,1){};
\node[box,fill=red] at (4,1){};
\node[box,fill=red] at (5,1){};
\node[box,fill=red] at (6,1){};
\node[box,fill=red] at (7,1){};
\node[box,fill=red] at (8,1){};
\node[box,fill=red] at (9,1){};
\node[box,fill=blue] at (10,1){};
\node[box,fill=blue] at (0,0){};
\node[box,fill=blue] at (1,0){};
\node[box,fill=blue] at (2,0){};
\node[box,fill=blue] at (3,0){};
\node[box,fill=blue] at (4,0){};
\node[box,fill=blue] at (5,0){};
\node[box,fill=blue] at (6,0){};
\node[box,fill=blue] at (7,0){};
\node[box,fill=blue] at (8,0){};
\node[box,fill=blue] at (9,0){};
\node[box,fill=blue] at (10,0){};
\draw[white] (0,10) node {0};
\draw[white] (1,10) node {1};
\draw[gray] (2,10) node {2};
\draw[white] (3,10) node {0};
\draw[white] (4,10) node {1};
\draw[gray] (5,10) node {2};
\draw[white] (6,10) node {0};
\draw[white] (7,10) node {1};
\draw[gray] (8,10) node {2};
\draw[white] (9,10) node {0};
\draw[white] (10,10) node {1};
\draw[white] (0,9) node {1};
\draw[white] (1,9) node {1};
\draw[gray] (2,9) node {2};
\draw[white] (3,9) node {0};
\draw[white] (4,9) node {1};
\draw[gray] (5,9) node {2};
\draw[white] (6,9) node {0};
\draw[white] (7,9) node {1};
\draw[gray] (8,9) node {2};
\draw[white] (9,9) node {0};
\draw[white] (10,9) node {1};
\draw[gray] (0,8) node {2};
\draw[gray] (1,8) node {2};
\draw[gray] (2,8) node {2};
\draw[white] (3,8) node {0};
\draw[white] (4,8) node {1};
\draw[gray] (5,8) node {2};
\draw[white] (6,8) node {0};
\draw[white] (7,8) node {1};
\draw[gray] (8,8) node {2};
\draw[white] (9,8) node {0};
\draw[white] (10,8) node {1};
\draw[white] (0,7) node {0};
\draw[white] (1,7) node {0};
\draw[white] (2,7) node {0};
\draw[white] (3,7) node {0};
\draw[white] (4,7) node {1};
\draw[gray] (5,7) node {2};
\draw[white] (6,7) node {0};
\draw[white] (7,7) node {1};
\draw[gray] (8,7) node {2};
\draw[white] (9,7) node {0};
\draw[white] (10,7) node {1};
\draw[white] (0,6) node {1};
\draw[white] (1,6) node {1};
\draw[white] (2,6) node {1};
\draw[white] (3,6) node {1};
\draw[white] (4,6) node {1};
\draw [gray](5,6) node {2};
\draw[white] (6,6) node {0};
\draw[white] (7,6) node {1};
\draw [gray](8,6) node {2};
\draw[white] (9,6) node {0};
\draw[white] (10,6) node {1};
\draw[gray] (0,5) node {2};
\draw [gray](1,5) node {2};
\draw [gray](2,5) node {2};
\draw [gray](3,5) node {2};
\draw [gray](4,5) node {2};
\draw [gray](5,5) node {2};
\draw[white] (6,5) node {0};
\draw[white] (7,5) node {1};
\draw [gray](8,5) node {2};
\draw[white] (9,5) node {0};
\draw[white] (10,5) node {1};
\draw[white] (0,4) node {0};
\draw[white] (1,4) node {0};
\draw[white] (2,4) node {0};
\draw[white] (3,4) node {0};
\draw[white] (4,4) node {0};
\draw[white] (5,4) node {0};
\draw[white] (6,4) node {0};
\draw[white] (7,4) node {1};
\draw[gray](8,4) node {2};
\draw[white] (9,4) node {0};
\draw[white] (10,4) node {1};
\draw[white] (0,3) node {1};
\draw[white] (1,3) node {1};
\draw[white] (2,3) node {1};
\draw[white] (3,3) node {1};
\draw[white] (4,3) node {1};
\draw[white] (5,3) node {1};
\draw[white] (6,3) node {1};
\draw[white] (7,3) node {1};
\draw[gray] (8,3) node {2};
\draw[white] (9,3) node {0};
\draw[white] (10,3) node {1};
\draw[gray] (0,2) node {2};
\draw[gray] (1,2) node {2};
\draw[gray] (2,2) node {2};
\draw[gray] (3,2) node {2};
\draw[gray] (4,2) node {2};
\draw[gray] (5,2) node {2};
\draw[gray] (6,2) node {2};
\draw[gray] (7,2) node {2};
\draw[gray] (8,2) node {2};
\draw[white] (9,2) node {0};
\draw[white] (10,2) node {1};
\draw[white] (0,1) node {0};
\draw[white] (1,1) node {0};
\draw[white] (2,1) node {0};
\draw[white] (3,1) node {0};
\draw[white] (4,1) node {0};
\draw[white] (5,1) node {0};
\draw[white] (6,1) node {0};
\draw[white] (7,1) node {0};
\draw[white] (8,1) node {0};
\draw[white] (9,1) node {0};
\draw[white] (10,1) node {1};
\draw[white] (0,0) node {1};
\draw[white] (1,0) node {1};
\draw[white] (2,0) node {1};
\draw[white] (3,0) node {1};
\draw[white] (4,0) node {1};
\draw[white] (5,0) node {1};
\draw[white] (6,0) node {1};
\draw[white] (7,0) node {1};
\draw[white] (8,0) node {1};
\draw[white] (9,0) node {1};
\draw[white] (10,0) node {1};
\end{tikzpicture}
\vspace{3mm}
\caption{The initial nimbers for the rulesets $C$ (left) and $D$ (right).}
\label{fig:CandD}
\end{figure}
\begin{figure}[htbp!]
    \centering
\begin{tikzpicture}[scale = 0.5,
         box/.style={rectangle,draw=black, thick, minimum size=.5 cm},
     ]
 \foreach \x in {0,1,...,10}{
     \foreach \y in {0,1,...,10}
         \node[box] at (\x,\y){};
 }
\draw (0, 11.2) node {$0$};
\draw (10, 11.2) node {$10$};
\draw (-1.2, 10.1) node {$0$};
\draw (-1.3, 0.1) node {$10$};
\node[box,fill=red] at (0,10){};
\node[box,fill=blue] at (1,10){};
\node[box,fill=yellow] at (2,10){};
\node[box,fill=red] at (3,10){};
\node[box,fill=blue] at (4,10){};
\node[box,fill=yellow] at (5,10){};
\node[box,fill=red] at (6,10){};
\node[box,fill=blue] at (7,10){};
\node[box,fill=yellow] at (8,10){};
\node[box,fill=red] at (9,10){};
\node[box,fill=blue] at (10,10){};
\node[box,fill=blue] at (0,9){};
\node[box,fill=red] at (1,9){};
\node[box,fill=yellow] at (2,9){};
\node[box,fill=red] at (3,9){};
\node[box,fill=blue] at (4,9){};
\node[box,fill=yellow] at (5,9){};
\node[box,fill=red] at (6,9){};
\node[box,fill=blue] at (7,9){};
\node[box,fill=yellow] at (8,9){};
\node[box,fill=red] at (9,9){};
\node[box,fill=blue] at (10,9){};
\node[box,fill=yellow] at (0,8){};
\node[box,fill=yellow] at (1,8){};
\node[box,fill=red] at (2,8){};
\node[box,fill=blue] at (3,8){};
\node[box,fill=blue] at (4,8){};
\node[box,fill=red] at (5,8){};
\node[box,fill=red] at (6,8){};
\node[box,fill=blue] at (7,8){};
\node[box,fill=yellow] at (8,8){};
\node[box,fill=red] at (9,8){};
\node[box,fill=blue] at (10,8){};
\node[box,fill=red] at (0,7){};
\node[box,fill=red] at (1,7){};
\node[box,fill=blue] at (2,7){};
\node[box,fill=blue] at (3,7){};
\node[box,fill=red] at (4,7){};
\node[box,fill=yellow] at (5,7){};
\node[box,fill=blue] at (6,7) {};
\node[box,fill=red] at (7,7){};
\node[box,fill=yellow] at (8,7){};
\node[box,fill=red] at (9,7){};
\node[box,fill=blue] at (10,7){};
\node[box,fill=blue] at (0,6){};
\node[box,fill=blue] at (1,6){};
\node[box,fill=blue] at (2,6){};
\node[box,fill=red] at (3,6){};
\node[box,fill=yellow] at (4,6){};
\node[box,fill=blue] at (5,6){};
\node[box,fill=teal] at (6,6) {};
\node[box,fill=yellow] at (7,6){};
\node[box,fill=red] at (8,6){};
\node[box,fill=blue] at (9,6){};
\node[box,fill=blue] at (10,6){};
\node[box,fill=yellow] at (0,5){};
\node[box,fill=yellow] at (1,5){};
\node[box,fill=red] at (2,5){};
\node[box,fill=yellow] at (3,5){};
\node[box,fill=blue] at (4,5){};
\node[box,fill=red] at (5,5){};
\node[box,fill=yellow] at (6,5) {};
\node[box,fill=red] at (7,5){};
\node[box,fill=blue] at (8,5){};
\node[box,fill=blue] at (9,5){};
\node[box,fill=red] at (10,5){};
\node[box,fill=red] at (0,4){};
\node[box,fill=red] at (1,4){};
\node[box,fill=red] at (2,4){};
\node[box,fill=blue] at (3,4){};
\node[box,fill=teal] at (4,4){};
\node[box,fill=yellow] at (5,4){};
\node[box,fill=red] at (6,4) {};
\node[box,fill=blue] at (7,4){};
\node[box,fill=blue] at (8,4){};
\node[box,fill=red] at (9,4){};
\node[box,fill=yellow] at (10,4){};
\node[box,fill=blue] at (0,3){};
\node[box,fill=blue] at (1,3){};
\node[box,fill=blue] at (2,3){};
\node[box,fill=red] at (3,3){};
\node[box,fill=yellow] at (4,3){};
\node[box,fill=red] at (5,3){};
\node[box,fill=blue] at (6,3) {};
\node[box,fill=blue] at (7,3){};
\node[box,fill=red] at (8,3){};
\node[box,fill=yellow] at (9,3){};
\node[box,fill=blue] at (10,3){};
\node[box,fill=yellow] at (0,2){};
\node[box,fill=yellow] at (1,2){};
\node[box,fill=yellow] at (2,2){};
\node[box,fill=yellow] at (3,2){};
\node[box,fill=red] at (4,2){};
\node[box,fill=blue] at (5,2){};
\node[box,fill=blue] at (6,2) {};
\node[box,fill=red] at (7,2){};
\node[box,fill=yellow] at (8,2){};
\node[box,fill=blue] at (9,2){};
\node[box,fill=red] at (10,2){};
\node[box,fill=red] at (0,1){};
\node[box,fill=red] at (1,1){};
\node[box,fill=red] at (2,1){};
\node[box,fill=red] at (3,1){};
\node[box,fill=blue] at (4,1){};
\node[box,fill=blue] at (5,1){};
\node[box,fill=red] at (6,1) {};
\node[box,fill=yellow] at (7,1){};
\node[box,fill=blue] at (8,1){};
\node[box,fill=red] at (9,1){};
\node[box,fill=yellow] at (10,1){};
\node[box,fill=blue] at (0,0){};
\node[box,fill=blue] at (1,0){};
\node[box,fill=blue] at (2,0){};
\node[box,fill=blue] at (3,0){};
\node[box,fill=blue] at (4,0){};
\node[box,fill=red] at (5,0){};
\node[box,fill=yellow] at (6,0) {};
\node[box,fill=blue] at (7,0){};
\node[box,fill=red] at (8,0){};
\node[box,fill=yellow] at (9,0){};
\node[box,fill=blue] at (10,0){};
\draw[white] (0,10) node {0};
\draw[white] (1,10) node {1};
\draw[gray] (2,10) node {2};
\draw[white] (3,10) node {0};
\draw[white] (4,10) node {1};
\draw[gray] (5,10) node {2};
\draw[white] (6,10) node {0};
\draw[white] (7,10) node {1};
\draw[gray] (8,10) node {2};
\draw[white] (9,10) node {0};
\draw[white] (10,10) node {1};
\draw[white] (0,9) node {1};
\draw[white] (1,9) node {0};
\draw[gray] (2,9) node {2};
\draw[white] (3,9) node {0};
\draw[white] (4,9) node {1};
\draw[gray] (5,9) node {2};
\draw[white] (6,9) node {0};
\draw[white] (7,9) node {1};
\draw[gray] (8,9) node {2};
\draw[white] (9,9) node {0};
\draw[white] (10,9) node {1};
\draw[gray] (0,8) node {2};
\draw[gray] (1,8) node {2};
\draw[white] (2,8) node {0};
\draw[white] (3,8) node {1};
\draw[white] (4,8) node {1};
\draw[white] (5,8) node {0};
\draw[white] (6,8) node {0};
\draw[white] (7,8) node {1};
\draw[gray] (8,8) node {2};
\draw[white] (9,8) node {0};
\draw[white] (10,8) node {1};
\draw[white] (0,7) node {0};
\draw[white] (1,7) node {0};
\draw[white] (2,7) node {1};
\draw[white] (3,7) node {1};
\draw[white] (4,7) node {0};
\draw[gray] (5,7) node {2};
\draw[white] (6,7) node {1};
\draw[white] (7,7) node {0};
\draw[gray] (8,7) node {2};
\draw[white] (9,7) node {0};
\draw[white] (10,7) node {1};
\draw[white] (0,6) node {1};
\draw[white] (1,6) node {1};
\draw[white] (2,6) node {1};
\draw[white] (3,6) node {0};
\draw[gray]  (4,6) node {2};
\draw[white] (5,6) node {1};
\draw[white] (6,6) node {3};
\draw[gray] (7,6) node {2};
\draw[white] (8,6) node {0};
\draw[white] (9,6) node {1};
\draw[white] (10,6) node {1};
\draw[gray] (0,5) node {2};
\draw[gray] (1,5) node {2};
\draw[white] (2,5) node {0};
\draw[gray] (3,5) node {2};
\draw[white] (4,5) node {1};
\draw[white] (5,5) node {0};
\draw[gray] (6,5) node {2};
\draw[white] (7,5) node {0};
\draw[white] (8,5) node {1};
\draw[white] (9,5) node {1};
\draw[white] (10,5) node {0};
\draw[white] (0,4) node {0};
\draw[white] (1,4) node {0};
\draw[white] (2,4) node {0};
\draw[white] (3,4) node {1};
\draw[white] (4,4) node {3};
\draw[gray] (5,4) node {2};
\draw[white] (6,4) node {0};
\draw[white] (7,4) node {1};
\draw[white] (8,4) node {1};
\draw[white] (9,4) node {0};
\draw[gray] (10,4) node {2};
\draw[white] (0,3) node {1};
\draw[white] (1,3) node {1};
\draw[white] (2,3) node {1};
\draw[white] (3,3) node {0};
\draw[gray] (4,3) node {2};
\draw[white] (5,3) node {0};
\draw[white] (6,3) node {1};
\draw[white] (7,3) node {1};
\draw[white] (8,3) node {0};
\draw[gray] (9,3) node {2};
\draw[white] (10,3) node {1};
\draw[gray] (0,2) node {2};
\draw[gray] (1,2) node {2};
\draw[gray] (2,2) node {2};
\draw[gray] (3,2) node {2};
\draw[white] (4,2) node {0};
\draw[white] (5,2) node {1};
\draw[white] (6,2) node {1};
\draw[white] (7,2) node {0};
\draw[gray] (8,2) node {2};
\draw[white] (9,2) node {1};
\draw[white] (10,2) node {0};
\draw[white] (0,1) node {0};
\draw[white] (1,1) node {0};
\draw[white] (2,1) node {0};
\draw[white] (3,1) node {0};
\draw[white] (4,1) node {1};
\draw[white] (5,1) node {1};
\draw[white] (6,1) node {0};
\draw[gray] (7,1) node {2};
\draw[white] (8,1) node {1};
\draw[white] (9,1) node {0};
\draw[gray] (10,1) node {2};
\draw[white] (0,0) node {1};
\draw[white] (1,0) node {1};
\draw[white] (2,0) node {1};
\draw[white] (3,0) node {1};
\draw[white] (4,0) node {1};
\draw[white] (5,0) node {0};
\draw[gray] (6,0) node {2};
\draw[white] (7,0) node {1};
\draw[white] (8,0) node {0};
\draw[gray] (9,0) node {2};
\draw[white] (10,0) node {1};
\end{tikzpicture}
\vspace{3mm}
\caption{The initial nimbers for $C \odot D$.}
\label{fig:CenforceD}
\end{figure}

\begin{example}
\label{ex:nimvalbishopknight}
Let $\X=\mathbb{Z}_{\geq 0} \times \mathbb{Z}_{\geq 0}$ and consider the  rulesets  {\sc White Bishop} $\large \symbishop$ and {\sc White Knight} $\large\symknight$ \cite{BCG82}, defined by 
$f_{\symbishop}((x, y))=\{(x-i,y-i) \mid 1 \leq i \leq \min(x,y) \}$ and $f_{\symknight}((x, y))=\{(x-2,y+1), (x-2,y-1), (x-1,y-2), (x+1, y-2) \}$. Figure~\ref{fig:WBWK2HN} displays the initial nimbers initial for these rulesets,  while the enforce combination  $\large \symbishop\odot \large \symknight $, is displayed in Figure~\ref{fig:BenfR_KenfR_BenfK}. Observe that the rulesets $\large\symbishop $ and $\large\symknight$ are confused.\\
\end{example}

\begin{figure}[htbp!]
\begin{center}
\begin{tikzpicture}[scale = 0.5,
         box/.style={rectangle,draw=black, thick, minimum size=.5 cm},
     ]

\foreach \x in {0,1,...,5}{
     \foreach \y in {0,1,...,5}
         \node[box] at (\x,\y){};
 }

\draw (0, 6.2) node {$0$};
\draw (5, 6.2) node {$5$};
 \draw (-1.2, 5.1) node {$0$};
 \draw (-1.2, 0.1) node {$5$};
 \node[box,fill=red] at (0,0){};
 \node[box,fill=red] at (0,1){};
 \node[box,fill=red] at (0,2){};
 \node[box,fill=red] at (0,3){};
 \node[box,fill=red] at (0,4){};
 \node[box,fill=red] at (0,5){};
\node[box,fill=red] at (1,5){};
\node[box,fill=red] at (2,5){};
\node[box,fill=red] at (3,5){};
\node[box,fill=red] at (4,5){};
\node[box,fill=red] at (5,5){};
\node[box,fill=blue] at (1,4){};
\node[box,fill=blue] at (2,4){};
\node[box,fill=blue] at (3,4){};
\node[box,fill=blue] at (4,4){};
\node[box,fill=blue] at (5,4){};
\node[box,fill=blue] at (1,3){};
\node[box,fill=yellow] at (2,3){};
\node[box,fill=yellow] at (3,3){};
\node[box,fill=yellow] at (4,3){};
\node[box,fill=yellow] at (5,3){};
\node[box,fill=blue] at (1,2){};
\node[box,fill=yellow] at (2,2){};
\node[box,fill=teal] at (3,2){};
\node[box,fill=teal] at (4,2){};
\node[box,fill=teal] at (5,2){};
\node[box,fill=blue] at (1,1){};
\node[box,fill=yellow] at (2,1){};
\node[box,fill=teal] at (3,1){};
\node[box,fill=green] at (4,1){};
\node[box,fill=green] at (5,1){};
\node[box,fill= blue] at (1,0){};
\node[box,fill=yellow] at (2,0){};
\node[box,fill=teal] at (3,0){};
\node[box,fill=green] at (4,0){};
\node[box,fill=brown] at (5,0){};
\draw[white] (0,0) node {0};
\draw[white] (0,1) node {0};
\draw[white] (0,2) node {0};
\draw[white] (0,3) node {0};
 \draw[white] (0,4) node {0};
 \draw[white] (0,5) node {0};
\draw[white] (1,5) node {0};
\draw[white] (2,5) node {0};
\draw[white] (3,5) node {0};
\draw[white] (4,5) node {0};
\draw[white] (5,5) node {0};
\draw[white] (1,4) node {1};
\draw[white] (2,4) node {1};
\draw[white] (3,4) node {1};
\draw[white] (4,4) node {1};
\draw[white] (5,4) node {1};
\draw[white] (1,3) node {1 };
\draw[gray] (2,3) node {2};
\draw[gray] (3,3) node {2};
\draw[gray] (4,3) node {2};
\draw[gray] (5,3) node {2};
\draw[white] (1,2) node {1};
\draw[gray] (2,2) node {2};
\draw[white] (3,2) node {3};
\draw[white] (4,2) node {3};
\draw[white] (5,2) node {3};
\draw[white] (1,1) node {1};
\draw[gray] (2,1) node {2};
\draw[white] (3,1) node {3};
\draw[white] (4,1) node {4};
\draw[white] (5,1) node {4};
\draw[white]  (1,0) node {1};
\draw[gray] (2,0) node {2};
\draw[white] (3,0) node {3};
\draw[white] (4,0) node {4};
\draw[white] (5,0) node {5};
\end{tikzpicture}
\begin{tikzpicture}[scale = 0.5,
         box/.style={rectangle,draw=black, thick, minimum size=.5 cm},
     ]

\foreach \x in {0,1,...,5}{
     \foreach \y in {0,1,...,5}
         \node[box] at (\x,\y){};
 }

\draw[thick, color=gray!60!yellow] (-0.55,-0.5) -- (-0.55, 5.55);

\draw[thick, color=gray!60!yellow] (-0.55, 5.55)-- (5.55, 5.55);

\draw (0, 6.2) node {$0$};
\draw (5, 6.2) node {$5$};

 \draw (-1.2, 5.1) node {$0$};
 \draw (-1.2, 0.1) node {$5$};

 \node[box,fill=red] at (0,0){};
 \node[box,fill=red] at (0,1){};
 \node[box,fill=blue] at (0,2){};
 \node[box,fill=blue] at (0,3){};
 \node[box,fill=red] at (0,4){};
 \node[box,fill=red] at (0,5){};

\node[box,fill=red] at (1,5){};
\node[box,fill=blue] at (2,5){};
\node[box,fill=blue] at (3,5){};
\node[box,fill=red] at (4,5){};
\node[box,fill=red] at (5,5){};

\node[box,fill=red] at (1,4){};
\node[box,fill=yellow] at (2,4){};
\node[box,fill=blue] at (3,4){};
\node[box,fill=red] at (4,4){};
\node[box,fill=red] at (5,4){};

\node[box,fill=yellow] at (1,3){};
\node[box,fill=yellow] at (2,3){};
\node[box,fill=yellow] at (3,3){};
\node[box,fill=teal] at (4,3){};
\node[box,fill=yellow] at (5,3){};

\node[box,fill=blue] at (1,2){};
\node[box,fill=yellow] at (2,2){};
\node[box,fill=blue] at (3,2){};
\node[box,fill=green] at (4,2){};
\node[box,fill=teal] at (5,2){};

\node[box,fill=red] at (1,1){};
\node[box,fill=teal] at (2,1){};
\node[box,fill=green] at (3,1){};
\node[box,fill=red] at (4,1){};
\node[box,fill=red] at (5,1){};

\node[box,fill= red] at (1,0){};
\node[box,fill=yellow] at (2,0){};
\node[box,fill=teal] at (3,0){};
\node[box,fill=red] at (4,0){};
\node[box,fill=red] at (5,0){};

\draw[white] (0,0) node {0};
\draw[white] (0,1) node {0};
\draw[white] (0,2) node {1};
\draw[white] (0,3) node {1};
\draw[white] (0,4) node {0};
 \draw[white] (0,5) node {0};

\draw[white] (1,5) node {0};
\draw[white] (2,5) node {1};
\draw[white] (3,5) node {1};
\draw[white] (4,5) node  {0};
\draw[white] (5,5) node {0};

\draw[white] (1,4) node {0};
\draw[gray] (2,4) node {2};
\draw[white] (3,4) node {1};
\draw[white] (4,4) node {0};
\draw[white] (5,4) node {0};

\draw[gray] (1,3) node {2};
\draw[gray] (2,3) node {2};
\draw[gray] (3,3) node {2};
\draw[white] (4,3) node {3};
\draw[gray] (5,3) node {2};

\draw[white] (1,2) node {1};
\draw[gray] (2,2) node {2};
\draw[white] (3,2) node {1};
\draw[white] (4,2) node {4};
\draw[white] (5,2) node {3};

\draw[white] (1,1) node {0};
\draw[white] (2,1) node {3};
\draw[white] (3,1) node {4};
\draw[white] (4,1) node {0};
\draw[white] (5,1) node {0};

\draw[white] (1,0) node {0};
\draw[gray] (2,0) node {2};
\draw[white] (3,0) node {3};
\draw[white] (4,0) node {0};
\draw[white] (5,0) node {0};
\end{tikzpicture}
\begin{tikzpicture}[scale = 0.5,
         box/.style={rectangle,draw=black, thick, minimum size=.5 cm},
     ]

\foreach \x in {0,1,...,5}{
     \foreach \y in {0,1,...,5}
         \node[box] at (\x,\y){};
 }



\draw (0, 6.2) node {$0$};
\draw (5, 6.2) node {$5$};

\draw (-1.2, 5.1) node {$0$};
\draw (-1.2, 0.1) node {$5$};

\node[box,fill=brown] at (0,0){};
\node[box,fill=green] at (0,1){};
\node[box,fill=teal] at (0,2){};
\node[box,fill=yellow] at (0,3){};
\node[box,fill=blue] at (0,4){};
\node[box,fill=red] at (0,5){};

\draw (1,0) node {4};
\draw (1,1) node {5};
\draw (1,2) node {2};
\draw (1,3) node {3};
\draw (1,4) node {0};
\draw (1,5) node {1};
\node[box,fill=green] at (1,0){};
\node[box,fill=brown] at (1,1){};
\node[box,fill=yellow] at (1,2){};
\node[box,fill=teal] at (1,3){};
\node[box,fill=red] at (1,4){};
\node[box,fill=blue] at (1,5){};

\node[box,fill=gray] at (2,0){};
\node[box,fill=purple] at (2,1){};
\node[box,fill=blue] at (2,2){};
\node[box,fill=red] at (2,3){};
\node[box,fill=teal] at (2,4){};
\node[box,fill=yellow] at (2,5){};

\node[box,fill=purple] at (3,0){};
\node[box,fill=gray] at (3,1){};
\node[box,fill=red] at (3,2){};
\node[box,fill=blue] at (3,3){};
\node[box,fill=yellow] at (3,4){};
\node[box,fill=teal] at (3,5){};

\draw[white] (4,0) node {1};
\draw[white] (4,1) node {0};
\draw[white] (4,2) node {7};
\draw[white] (4,3) node {6};
\draw[white] (4,4) node {5};
\draw[white] (4,5) node {4};
\node[box,fill=blue] at (4,0){};
\node[box,fill=red] at (4,1){};
\node[box,fill=gray] at (4,2){};
\node[box,fill=purple] at (4,3){};
\node[box,fill=brown] at (4,4){};
\node[box,fill=green] at (4,5){};

\draw[white] (5,0) node {0};
\draw[white] (5,1) node {1};
\draw[white] (5,2) node {6};
\draw[white] (5,3) node {7};
\draw[white] (5,4) node {4};
\draw[white] (5,5) node {5};
\node[box,fill=red] at (5,0){};
\node[box,fill=blue] at (5,1){};
\node[box,fill=purple] at (5,2){};
\node[box,fill=gray] at (5,3){};
\node[box,fill=green] at (5,4){};
\node[box,fill=brown] at (5,5){};
\draw[white] (0,0) node{5};
\draw[white] (0,1) node {4};
\draw[white] (0,2) node {3};
\draw[gray] (0,3) node {2};
\draw[white] (0,4) node {1};
\draw[white] (0,5) node {0};
\draw[white] (1,0) node {4};
\draw[white] (1,1) node {5};
\draw[gray] (1,2) node {2};
\draw[white] (1,3) node {3};
\draw[white] (1,4) node {0};
\draw[white] (1,5) node {1};

\draw[white] (2,0) node {7};
\draw[white] (2,1) node {6};
\draw[white] (2,2) node {1};
\draw[white] (2,3) node {0};
\draw[white] (2,4) node {3};
\draw[gray] (2,5) node {2};

\draw[white] (3,0) node {6};
\draw[white] (3,1) node {7};
\draw[white] (3,2) node {0};
\draw[white] (3,3) node {1};
\draw[gray] (3,4) node {2};
\draw[white] (3,5) node {3};

\draw[white] (4,0) node {1};
\draw[white] (4,1) node {0};
\draw[white] (4,2) node {7};
\draw[white] (4,3) node {6};
\draw[white] (4,4) node {5};
\draw[white] (4,5) node {4};

\draw[white] (5,0) node {0};
\draw[white] (5,1) node {1};
\draw[white] (5,2) node {6};
\draw[white] (5,3) node {7};
\draw[white] (5,4) node {4};
\draw[white] (5,5) node {5};
\end{tikzpicture}
\end{center}
 \vspace{3mm}
\caption{The initial nimbers of {\sc White Bishop} $\large \symbishop$ (left) {\sc White Knight}, $\large \symknight$ (middle), and 
 {\sc Two Heap Nim} $\large \symrook$ (right).}\label{fig:WBWK2HN}
\end{figure}

\begin{figure}[htbp!]
\begin{center}
\begin{tikzpicture}[scale = 0.5,
         box/.style={rectangle,draw=black, thick, minimum size=.5 cm},
     ]
\foreach \x in {0,1,...,5}{
     \foreach \y in {0,1,...,5}
         \node[box] at (\x,\y){};
 }
\draw (0, 6.2) node {$0$};
\draw (5, 6.2) node {$5$};
 \draw (-1.2, 5.1) node {$0$};
 \draw (-1.2, 0.1) node {$5$};
 \node[box,fill=red] at (0,0){};
 \node[box,fill=red] at (0,1){};
 \node[box,fill=red] at (0,2){};
 \node[box,fill=red] at (0,3){};
 \node[box,fill=red] at (0,4){};
 \node[box,fill=red] at (0,5){};
\node[box,fill=red] at (1,5){};
\node[box,fill=red] at (2,5){};
\node[box,fill=red] at (3,5){};
\node[box,fill=red] at (4,5){};
\node[box,fill=red] at (5,5){};
\node[box,fill=blue] at (1,4){};
\node[box,fill=blue] at (2,4){};
\node[box,fill=blue] at (3,4){};
\node[box,fill=blue] at (4,4){};
\node[box,fill=blue] at (5,4){};
\node[box,fill=blue] at (1,3){};
\node[box,fill=yellow] at (2,3){};
\node[box,fill=yellow] at (3,3){};
\node[box,fill=yellow] at (4,3){};
\node[box,fill=yellow] at (5,3){};
\node[box,fill=blue] at (1,2){};
\node[box,fill=yellow] at (2,2){};
\node[box,fill=teal] at (3,2){};
\node[box,fill=teal] at (4,2){};
\node[box,fill=teal] at (5,2){};
\node[box,fill=blue] at (1,1){};
\node[box,fill=yellow] at (2,1){};
\node[box,fill=teal] at (3,1){};
\node[box,fill=green] at (4,1){};
\node[box,fill=green] at (5,1){};
\node[box,fill= blue] at (1,0){};
\node[box,fill=yellow] at (2,0){};
\node[box,fill=teal] at (3,0){};
\node[box,fill=green] at (4,0){};
\node[box,fill=brown] at (5,0){};
\draw[white] (0,0) node {0};
\draw[white] (0,1) node {0};
\draw[white] (0,2) node {0};
\draw[white] (0,3) node {0};
\draw[white] (0,4) node {0};
\draw[white] (0,5) node {0};
\draw[white] (1,5) node {0};
\draw[white] (2,5) node {0};
\draw[white] (3,5) node {0};
\draw[white] (4,5) node {0};
\draw[white] (5,5) node {0};
\draw[white] (1,4) node {1};
\draw[white] (2,4) node {1};
\draw[white] (3,4) node {1};
\draw[white] (4,4) node {1};
\draw[white] (5,4) node {1};
\draw[white] (1,3) node {1};
\draw[gray] (2,3) node {2};
\draw[gray] (3,3) node {2};
\draw[gray] (4,3) node {2};
\draw[gray] (5,3) node {2};
\draw[white] (1,2) node {1};
\draw[gray] (2,2) node {2};
\draw[white] (3,2) node {3};
\draw[white] (4,2) node {3};
\draw[white] (5,2) node {3};
\draw[white] (1,1) node {1};
\draw[gray] (2,1) node {2};
\draw[white] (3,1) node {3};
\draw[white] (4,1) node {4};
\draw[white] (5,1) node {4};
\draw[white] (1,0) node {1};
\draw[gray] (2,0) node {2};
\draw[white] (3,0) node {3};
\draw[white] (4,0) node {4};
\draw[white] (5,0) node {5};
\end{tikzpicture}\hspace{2 mm}
\begin{tikzpicture}[scale = 0.5,
         box/.style={rectangle,draw=black, thick, minimum size=.5 cm},
     ]
\foreach \x in {0,1,...,5}{
     \foreach \y in {0,1,...,5}
         \node[box] at (\x,\y){};
 }
\draw (0, 6.2) node {$0$};
\draw (5, 6.2) node {$5$};
\draw (-1.2, 5.1) node {$0$};
\draw (-1.2, 0.1) node {$5$};
\draw[white] (0,0) node {0};
\draw[white] (0,1) node {0};
\draw[white] (0,2) node {1};
\draw[white] (0,3) node {1};
\draw[white] (0,4) node {0};
\draw[white] (0,5) node {0};
\node[box,fill=red] at (0,0){};
\node[box,fill=red] at (0,1){};
\node[box,fill=blue] at (0,2){};
\node[box,fill=blue] at (0,3){};
\node[box,fill=red] at (0,4){};
\node[box,fill=red] at (0,5){};
\draw[white] (5,5) node {0};
\draw[white] (4,5) node {0};
\draw[white] (3,5) node {1};
\draw[white] (2,5) node {1};
\draw[white] (1,5) node {0};
\node[box,fill=red] at (5,5){};
\node[box,fill=red] at (4,5){};
\node[box,fill=blue] at (3,5){};
\node[box,fill=blue] at (2,5){};
\node[box,fill=red] at (1,5){};
\node[box,fill=red] at (1,4){};
\node[box,fill=yellow] at (2,4){};
\node[box,fill=blue] at (3,4){};
\node[box,fill=yellow] at (4,4){};
\node[box,fill=red] at (5,4){};
\node[box,fill=yellow] at (1,3){};
\node[box,fill=red] at (2,3){};
\node[box,fill=yellow] at (3,3){};
\node[box,fill=teal] at (4,3){};
\node[box,fill=yellow] at (5,3){};
\node[box,fill=blue] at (1,2){};
\node[box,fill=yellow] at (2,2){};
\node[box,fill=red] at (3,2){};
\node[box,fill=yellow] at (4,2){};
\node[box,fill=red] at (5,2){};
\node[box,fill=yellow] at (1,1){};
\node[box,fill=teal] at (2,1){};
\node[box,fill=yellow] at (3,1){};
\node[box,fill=red] at (4,1){};
\node[box,fill=blue] at (5,1){};
\node[box,fill=red] at (1,0){};
\node[box,fill=yellow] at (2,0){};
\node[box,fill=red] at (3,0){};
\node[box,fill=blue] at (4,0){};
\node[box,fill=red] at (5,0){};
\draw[white] (0,0) node {0};
\draw[white] (0,1) node {0};
\draw[white] (0,2) node {1};
\draw[white] (0,3) node {1};
\draw[white] (0,4) node {0};
\draw[white] (0,5) node {0};
\draw[white] (5,5) node {0};
\draw[white] (4,5) node {0};
\draw[white] (3,5) node {1};
\draw[white] (2,5) node {1};
\draw[white] (1,5) node {0};
\draw[white]  (1,4) node {0};
\draw[gray] (2,4) node {2};
\draw[white] (3,4) node {1};
\draw[gray] (4,4) node {2};
\draw[white] (5,4) node {0};
\draw[gray] (1,3) node {2};
\draw[white] (2,3) node {0};
\draw[gray] (3,3) node {2};
\draw[white] (4,3) node {3};
\draw[gray] (5,3) node {2};
\draw[white] (1,2) node {1};
\draw[gray] (2,2) node {2};
\draw[white] (3,2) node {0};
\draw[gray] (4,2) node {2};
\draw[white] (5,2) node {0};
\draw[gray] (1,1) node {2};
\draw[white] (2,1) node {3};
\draw[gray] (3,1) node {2};
\draw[white] (4,1) node {0};
\draw[white] (5,1) node {1};
\draw[white] (1,0) node {0};
\draw[gray] (2,0) node {2};
\draw[white] (3,0) node {0};
\draw[white] (4,0) node {1};
\draw[white] (5,0) node {0};
\end{tikzpicture}\hspace{2 mm}
\begin{tikzpicture}[scale = 0.5,
         box/.style={rectangle,draw=black, thick, minimum size=.5 cm},
     ]
\foreach \x in {0,1,...,5}{
     \foreach \y in {0,1,...,5}
         \node[box] at (\x,\y){};
 }
\draw[thick, color=gray!60!yellow] (-0.55,-0.5) -- (-0.55, 5.55);
\draw[thick, color=gray!60!yellow] (-0.55, 5.55)-- (5.55, 5.55);
\draw (0, 6.2) node {$0$};
\draw (5, 6.2) node {$5$};
\draw (-1.2, 5.1) node {$0$};
\draw (-1.2, 0.1) node {$5$};
\node[box,fill=red] at (0,0){};
\node[box,fill=red] at (0,1){};
\node[box,fill=red] at (0,2){};
\node[box,fill=red] at (0,3){};
\node[box,fill=red] at (0,4){};
\node[box,fill=red] at (0,5){};
\node[box,fill=blue] at (1,0){};
\node[box,fill=blue] at (1,1){};
\node[box,fill=blue] at (1,2){};
\node[box,fill=blue] at (1,3){};
\node[box,fill=red] at (1,4){};
\node[box,fill=red] at (1,5){};
\node[box,fill=yellow] at (2,0){};
\node[box,fill=yellow] at (2,1){};
\node[box,fill=yellow] at (2,2){};
\node[box,fill=blue] at (2,3){};
\node[box,fill=blue] at (2,4){};
\node[box,fill=red] at (2,5){};
\node[box,fill=teal] at (3,0){};
\node[box,fill=red] at (3,1){};
\node[box,fill=red] at (3,2){};
\node[box,fill=yellow] at (3,3){};
\node[box,fill=blue] at (3,4){};
\node[box,fill=red] at (3,5){};
\node[box,fill=blue] at (4,0){};
\node[box,fill=red] at (4,1){};
\node[box,fill=red] at (4,2){};
\node[box,fill=yellow] at (4,3){};
\node[box,fill=blue] at (4,4){};
\node[box,fill=red] at (4,5){};
\node[box,fill=blue] at (5,0){};
\node[box,fill=blue] at (5,1){};
\node[box,fill=teal] at (5,2){};
\node[box,fill=yellow] at (5,3){};
\node[box,fill=blue] at (5,4){};
\node[box,fill=red] at (5,5){};
\draw[white] (0,0) node {0};
\draw[white] (0,1) node {0};
\draw[white] (0,2) node {0};
\draw[white] (0,3) node {0};
\draw[white] (0,4) node {0};
\draw[white] (0,5) node {0};
\draw[white] (1,0) node {1};
\draw[white] (1,1) node {1};
\draw[white] (1,2) node {1};
\draw[white] (1,3) node {1};
\draw[white] (1,4) node {0};
\draw[white] (1,5) node {0};
\draw[gray] (2,0) node {2};
\draw[gray] (2,1) node {2};
\draw[gray] (2,2) node {2};
\draw[white] (2,3) node {1};
\draw[white] (2,4) node {1};
\draw[white] (2,5) node {0};
\draw[white] (3,0) node {3};
\draw[white] (3,1) node {0};
\draw[white] (3,2) node {0};
\draw[gray] (3,3) node {2};
\draw[white] (3,4) node {1};
\draw[white] (3,5) node {0};
\draw[white] (4,0) node {1};
\draw[white] (4,1) node {0};
\draw[white] (4,2) node {0};
\draw[gray] (4,3) node {2};
\draw[white] (4,4) node {1};
\draw[white] (4,5) node {0};
\draw[white] (5,0) node {1};
\draw[white] (5,1) node {1};
\draw[white] (5,2) node {3};
\draw[gray] (5,3) node {2};
\draw[white] (5,4) node {1};
\draw[white] (5,5) node {0};
\end{tikzpicture}
\end{center}
 \vspace{3mm}
 \caption{The initial nimbers for the rulesets $\large \symbishop \odot \large \symrook $ (left),  
 $\large\symknight \odot  \large\symrook$ (middle) and 
 $\large \symbishop \odot \large\symknight$ (right).
}\label{fig:BenfR_KenfR_BenfK}
\end{figure}

\noindent {\bf A solution to the problem in Figure~\ref{fig:disjunctive(knight.nim+bishop.nim)}.} Observe from Figure~\ref{fig:BenfR_KenfR_BenfK} that the nimber of $ \large \symknight \odot \large \symrook$ at position $(3,5)$ is $0$  and the nimber of $ \large \symbishop \odot \large \symrook$ at position $(4,3)$ is $3$. Therefore, Alice can win by choosing the combined piece $\large \symbishop \odot \large \symrook$. Whatever rule Bob enforces, Alice can play such that the disjunctive sum nimber is $0$. For instance, if Bob  enforces $ \large \symbishop$, then she plays to $(1,0)$, and otherwise, she plays to $(0,3)$. Therefore, Alice wins, by Theorem~\ref{thm:nimvalue}. Thus, in the first paragraph of the paper, Alice made a  correct move, and thus all Bob's moves are losing. Hence, both played optimally. By continuing from the position in Figure~\ref{fig:disjunctive(knight.nim+bishop.nim)2}, Alice chooses $ \large \symknight \odot \large \symrook$, and plays to either $(1,1)$ or $(2,2)$ depending on which ruleset Bob enforces.\\ 

Let us give one more example from game practice.\\ 
\begin{figure}[htbp!]
\begin{center}
\begin{tikzpicture}[scale = 1,
         box/.style={rectangle,draw=gray!60!yellow,  thick, minimum size=1 cm},
     ]
\foreach \x in {0,1,...,5}{
     \foreach \y in {0,1,...,5}
         \node[box] at (\x,\y){};
 }
\draw[ultra thick, color=gray!60!yellow] (-0.5,-0.5) -- (-0.5, 5.5);
\draw[ultra thick, color=gray!60!yellow] (-0.5, 5.5)-- (5.5, 5.5);
\draw (4,0) node {\large\symbishop \!$\cdot$\!\! \symknight};
\draw[ line width=0.65 pt] (4,0) circle (12.2 pt);
\draw (5,1) node {\Large\symrook};
\draw (0, 5.9) node {$0$};
\draw (-0.9, 5) node {$0$};
\end{tikzpicture}
\end{center}
 \vspace{3mm}
\caption{A disjunctive sum $\large \symbishop \!\odot\!  \large \symknight +\large \symrook$.}\label{fig:BenfKplusR}
\end{figure}
\begin{example}
Who wins if Alice starts the game in Figure~\ref{fig:BenfKplusR}?
First observe from Figure~\ref{fig:WBWK2HN} that the nimber of $\symrook$  at position $(4,5)$ is $1$, and from Figure~\ref{fig:BenfR_KenfR_BenfK} that the nimber of $\large \symbishop \odot \large \symknight$ at position $(5,4)$ is $1$. Therefore, the nimber of the disjunctive sum is $1 \oplus 1= 0$, a $\mathcal{P}$-position. However, if Alice chooses the combined piece, then she can win if Bob enforces the wrong component. Namely, if Bob, by mistake,  enforces the Bishop $\symbishop$, then Alice can move this game piece to the nimber one, at position $(2,1)$, and win. If Alice chooses the Rook $\symrook$, then Bob can reverse to nimber one with the same piece, unless Alice plays to position $(4,4)$ of nimber zero. In this case, Bob can play the combined piece to a nearby nimber zero irrespective of which game piece Alice enforces.
\end{example}

\section{Open problems}\label{sec:openp}
We propose the following open problems. 
We do not yet know if strong domination holds for any non-trivial rulesets.\\
\begin{problem}
    Is there a domination criterion between strong domination and domination that inherits transitivity from strong domination, and is satisfied by  interesting rulesets?\\
\end{problem}

 \begin{problem}
     Are there other `simpler' properties  that reveal when a ruleset dominates another ruleset?\\ 
 \end{problem}

\begin{problem}
    Study the nimbers of individual ruleset combinations with respect to the enforce operator, such as the ones in Section \ref{sec:examples}.\\
\end{problem}

From Figure~\ref{fig:YNWN}, we observe that the nimbers of $\large\rook_{\bf Y}$ coincide with those of $\large\rook_{\bf Y}\odot\large\symqueen$; see also the rulesets $\large \symbishop$ and $\large \symbishop \odot \large \symrook$.\\ 

\begin{problem}
    If a ruleset dominates another, does it also `nimber dominate' that ruleset? That is, does it follow that all nimbers of the combined ruleset are the same as those of the dominating ruleset?\\
\end{problem}

In a partizan setting a ruleset has distinct sets of options for the two players.\\

\begin{problem}
    Study ruleset domination in a partizan setting: when can both players always enforce the same partizan ruleset in optimal play?\\
\end{problem}

\begin{problem}
Study the distributive lattice structure for selective and enforce operator for partizan rulesets. More specifically, when does Theorem~\ref{thm:Absorption-Distributive} hold in a  partizan setting?\\ 
\end{problem}

\vspace{3mm}

\bmhead{Acknowledgments}
This work is supported by JSPS, Grant-in-Aid for Early-Career Scientists, Grant Number 22K13953. 
This work is also supported by JSPS Kakenhi 20K03514, 23K03071, 20H01798.
This work is also supported by JSPS KAKENHI JP21K21283.
This work is also supported by JST, the establishment of university fellowships towards the creation of science technology innovation, Grant Number JPMJFS2129 and JST SPRING, Grant Number JPMJSP2132. Indrajit Saha is partially supported by JSPS KAKENHI Grant Number JP21H04979 and JST ERATO Grant Number JPMJER2301.

\end{document}